    \author{Jean-Dominique Deuschel\footnote{Technische Universit\"at Berlin, {\tt deuschel@math.tu-berlin.de}},\, Henri Elad Altman\footnote{Imperial College London {\tt h.elad-altman@imperial.ac.uk}}
    and Tal Orenshtein\footnote{ Technische Universit\"at Berlin and Weierstrass Institute, {\tt orenshtein@tu-berlin.de}}}
    \title{On the gradient dynamics associated with wetting models}
    \date{}
\def\jean-dominique#1{{\color{green} Jean-Dominique: #1}}
\renewcommand{\d}{\, \mathrm{d}}
\theoremstyle{plain}
\newtheorem{thm}{Theorem}[section]
\newtheorem{prop}[thm]{Proposition}
\newtheorem{cnj}[thm]{Conjecture}
\theoremstyle{definition}
\newtheorem{rk}[thm]{Remark}
\numberwithin{equation}{section}
\begin{document}
\maketitle

\begin{abstract}
We prove a tightness result for the reversible gradient dynamics associated with critical wetting models with a shrinking strip, thus answering a conjecture of \cite{deuschel2019scaling}. We also introduce a continuous critical wetting model defined by the law of a Brownian meander tilted by its local time near the origin, and prove its convergence to the law of a reflecting Brownian motion. We further provide a description of the associated gradient dynamics in terms of an SPDE with reflection and attraction,
which we conjecture to converge to a Bessel SPDE as introduced in \cite{EladAltman2019}.
\end{abstract}

{\bf Keywords:} wetting models, $\delta$-pinning, strip wetting, gradient dynamics, scaling limit, local times, Brownian meander, Bessel processes,  Bessel SPDEs. 


\section{Introduction}

Spectacular progress has been achieved in the past few years on scaling limits of randomly evolving discrete $(1+1)$ interface models, one celebrated example being growth models and the KPZ universality class. For interfaces evolving in the presence of a boundary (or a wall), recent development has concerned the study of the wetting models as considered in \cite{MR1865758,dgz,MR2217821} and the corresponding gradient dynamics, see \cite[Chapter 15.2]{funakistflour} and \cite{fattler2016construction,grothaus18feller}, as well as \cite{deuschel2019scaling} where a wetting model with a shrinking strip was introduced. A different approach based on integration by parts formuale was adopted in \cite{EladAltman2019} and \cite{altman2019bessel}, which proposed a family of equations, called Bessel SPDEs, which 
should describe the universality classes associated with dynamical interfaces evolving over a wall. These SPDEs, which extend to parameters $\delta <3$ the family of SPDEs considered and solved in \cite{zambotti2003integration}, can be seen as the gradient dynamics associated with the laws of Bessel processes, which they leave invariant. In particular, the Bessel SPDE of parameter $\delta=1$ admits the law of a reflecting Brownian motion as invariant measure, and can be reasonably conjectured to describe the scaling limit of dynamical critical wetting models. However, for that equation, only weak existence at equilibrium could be achieved using Dirichlet form techniques. Moreover, the associated Markov semigroup is not known to be strong Feller, so the methodology proposed in \cite{zambotti2004fluctuations} to prove scaling limits using integration by parts formulae is not effective in that case.   

In this article, we pursue a route based on approximation, by studying limits of the gradient dynamics of several wetting models. We thus consider the gradient dynamics of the wetting model with a shrinking strip of \cite{deuschel2019scaling}, for which we obtain a tightness result, but we also introduce a continuous variant of this model for which we prove a static approximation result, and consider the associated gradient SPDE. In that respect, this work is meant as a further step to make the bridge between discrete and continuous interfaces constrained by a wall.

\subsection{Wetting model with $\delta$-pinning}

We recall the definition of the wetting models we shall consider. Let us emphasise that numerous versions of these models exist: while \cite{MR1865758} considered a specific, discrete-in-space wetting model, given by a random walk with discrete steps, \cite{dgz} studied a continuous variant of it, and later the article \cite{MR2217821} addressed a general setting encompassing both the discrete and the continuous cases. Here, we restrict ourselves to the continuous-in-space case studied in \cite{dgz}, which we shall refer to as the wetting model with $\delta$-pinning. Namely, for all $N \geq 1$
and $\beta \in \mathbb{R}$, we consider the measure $\mathbb{P}^{f}_{\beta, N}$ on $\mathbb{R}_+^N$ defined by
\[\mathbb{P}^{f}_{\beta, N}({\rm{d}} \phi) = \frac{1}{Z^{f}_{\beta,N}} \rho \left( \phi \right) \prod_{i =1}^N \left( \d \phi_i \mathbf{1}_{[0,\infty)} + e^{\beta} \delta_{0}({\rm{d}} \phi_i) \right), \]
where
\[\rho(\phi) = \exp \left(- \sum_{i = 0}^{N-1} V(\phi_{i+1} - \phi_i) \right).\]
with $\phi(0) := 0$, and where $V: \mathbb{R} \to \mathbb{R} \cup \{ \infty \}$ is such that $\exp(-V)$ is continuous, $V(0)< \infty$, and
\[ \int_\mathbb{R} e^{-V} < \infty. \]
In this article, for simplicity, we shall consider $V(x) = \frac{x^{2}}{2}$ for all $x \in \mathbb{R}$. Above, the superscript $f$ stands for ``free'', meaning that we do not constrain the value of $\phi_N$.

We also recall the main theorem in \cite{dgz} (re-derived later in \cite{MR2217821} with a simplified proof). Let $H= L^2(0,1)$. For all $N \geq 1$, we  let $\Phi_{N} : \mathbb{R}^{N} \to H$ denote the rescaling and interpolation map defined by
\begin{equation}
\label{scaling_and_interp_map}
\Phi_{N} (\phi) (y) = \frac{1}{\sqrt{N}} \phi_{\lfloor N y \rfloor} +
\frac{1}{\sqrt{N}} (Ny - \lfloor N y \rfloor) \left( \phi_{\lfloor N y \rfloor + 1} - \phi_{\lfloor N y \rfloor} \right), \quad y \in [0,1],
\end{equation}
where we use the convention that $\phi_0 := 0$ in the right-hand side above. We will denote by $H_{N}$ the vector space $\Phi_{N}(\mathbb{R}^{N}) \subset H$, which coincides with the space of continuous piecewise affine functions adapted to the partition $[\frac{i-1}{N}, \frac{i}{N}), 1 \leq i \leq N$. We finally denote by $\mathbf{P}^{f}_{\beta, N}$ the image of the measure $\mathbb{P}^{f}_{\beta, N}$ under $\Phi_{N}$. 

\begin{thm}[\cite{dgz}]
There exists $\beta_c \in (0,\infty)$ such that:
\begin{itemize}
\item if $\beta < \beta_c$ (subcritical case), then $\mathbf{P}^{f}_{\beta, N} \underset{N \to \infty}{\longrightarrow} m$ in law, where $m$ is the law of a Brownian meander on $[0,1]$
\item if $\beta = \beta_c$ (critical case), then $\mathbf{P}^{f}_{\beta, N} \underset{N \to \infty}{\longrightarrow} P^1_0$ in law, where $P^1_0$ is the law of a reflecting Brownian motion started from $0$ on $[0,1]$
\item if $\beta > \beta_c$ (supercritical case), then $\mathbf{P}^{f}_{\beta, N}$ converges in law, as $N \to \infty$, to the measure concentrated on the function identically equal to $0$ on $[0,1]$.
\end{itemize}
\end{thm}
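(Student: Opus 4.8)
The plan is to reduce all three statements to sharp asymptotics of the partition function $Z^f_{\beta,N}$, obtained through a renewal decomposition. Expanding the product $\prod_{i=1}^N(\d\phi_i\,\mathbf 1_{[0,\infty)}+e^\beta\delta_0(\d\phi_i))$ over the random set $A=\{i\le N:\phi_i=0\}$ of pinned sites, and using that the Gaussian weight $\rho$ factorises across the gaps between consecutive pinned sites, one obtains, writing $A=\{t_1<\dots<t_k\}$ and $t_0:=0$,
\[
Z^f_{\beta,N}=(2\pi)^{N/2}\sum_{k\ge 0}\ \sum_{0=t_0<t_1<\dots<t_k\le N}e^{\beta k}\Bigl(\prod_{j=1}^k\mathfrak p_{\,t_j-t_{j-1}}\Bigr)\,\bar{\mathfrak p}_{\,N-t_k},
\]
where $\mathfrak p_n$ is the density at $0$ of the standard Gaussian random walk at time $n$ on the event of staying strictly positive at times $1,\dots,n-1$, $\bar{\mathfrak p}_m$ is the probability it stays strictly positive at times $1,\dots,m$, and $\bar{\mathfrak p}_0:=1$. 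A local limit theorem for walks conditioned to stay positive, together with the Sparre Andersen theorem, yields the tails $\mathfrak p_n\sim c\,n^{-3/2}$ and $\bar{\mathfrak p}_m\sim c'\,m^{-1/2}$. Hence $(2\pi)^{-N/2}Z^f_{\beta,N}=(U*\bar{\mathfrak p})(N)$, where $U$ is the renewal mass of the renewal process with inter-arrival weights $K_\beta(n):=e^\beta\mathfrak p_n$.

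I would then set $\beta_c:=-\log\sum_{n\ge1}\mathfrak p_n$, so that $\sum_n K_{\beta_c}(n)=1$; here $\beta_c<\infty$ is trivial, while $\beta_c>0$ is the genuine inequality $\sum_n\mathfrak p_n<1$, which one checks directly for the Gaussian walk. The three phases then correspond to $K_\beta$ being defective ($\beta<\beta_c$), critical with infinite mean ($\beta=\beta_c$, the tail exponent being $3/2<2$), or — after the exponential tilt replacing $\mathfrak p_n$ by $e^{-\lambda^*(\beta)\,n}\mathfrak p_n$ so as to restore total mass $1$ and finite mean — positive recurrent ($\beta>\beta_c$, with $\lambda^*(\beta)>0$). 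In each case standard renewal theory gives the asymptotics of $(2\pi)^{-N/2}Z^f_{\beta,N}$: of order $N^{-1/2}$ when $\beta<\beta_c$ (since $\sum_n U(n)<\infty$ and $\bar{\mathfrak p}_m\sim c'm^{-1/2}$); of order $e^{\lambda^*(\beta)N}$ when $\beta>\beta_c$ (since $U(n)\sim e^{\lambda^*(\beta)n}/\mu$ with $\mu$ the tilted mean); and convergent to a positive constant when $\beta=\beta_c$, by the Garsia--Lamperti / Doney renewal theorem ($U(n)\sim C\,n^{-1/2}$, and the convolution of two $n^{-1/2}$ tails is bounded). The same arguments describe the pinned set: $O_{\mathbf P}(1)$ pinned sites clustered near $0$ when $\beta<\beta_c$; positive density of pinned sites with exponentially thin gaps when $\beta>\beta_c$; and, when $\beta=\beta_c$, the pinned set rescaled by $N$ converges to the zero set of a Brownian motion, i.e. a regenerative set of index $1/2$.

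Given these inputs I would conclude in two steps. Tightness of $\mathbf P^f_{\beta,N}$ in $C([0,1])$ holds in all three regimes: Kolmogorov-type moment bounds on the increments of $\Phi_N(\phi)$ follow from the Gaussian structure, the $\delta$-pinning and the positivity constraint only helping to localise the field (the effect of the reweighting on increments being controlled by partition-function ratios). To identify the limit I would prove convergence of finite-dimensional distributions: for fixed $0<y_1<\dots<y_r<1$, the joint law of $(\Phi_N(\phi)(y_1),\dots,\Phi_N(\phi)(y_r))$ under $\mathbf P^f_{\beta,N}$ is governed by ratios of constrained partition functions, each of which the renewal asymptotics above evaluate. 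When $\beta<\beta_c$ the dominant contribution is a single positive excursion of length $\sim N$ with free right endpoint, whose rescaling is the Brownian meander, and one matches the Durrett--Iglehart--Miller formulas. When $\beta=\beta_c$, conditioning on the rescaled pinned set (a Brownian zero set) and gluing the independent rescaled excursions — which converge to normalised Brownian excursions, with a meander for the last piece — reproduces, via It\^o's excursion description of reflecting Brownian motion, exactly the law $P^1_0$. When $\beta>\beta_c$, $\max_{i\le N}\phi_i=O_{\mathbf P}(\sqrt{\log N})=o_{\mathbf P}(\sqrt N)$, so $\Phi_N(\phi)\to0$ in $C([0,1])$.

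The main obstacle is the critical case $\beta=\beta_c$: it requires both the sharp renewal asymptotics in the infinite-mean ($3/2$-tail) regime and, more delicately, the joint convergence of the discrete pinned set (rescaled by $N$) together with the excursions sitting on it to a Brownian zero set decorated by Brownian excursions, plus the recognition that the resulting glued object is a reflecting Brownian motion started from $0$. Verifying $\beta_c>0$, i.e. $\sum_n\mathfrak p_n<1$, though elementary, is another point that must not be skipped.
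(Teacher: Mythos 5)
This theorem is quoted in the paper without proof and is attributed to \cite{dgz} (and re-derived in \cite{MR2217821}), so there is no in-paper proof to compare against. Your sketch is essentially the renewal-theoretic argument that those references use: expand over the set of pinned sites to get a renewal representation of $Z^f_{\beta,N}$, identify the inter-arrival law $K_\beta(n)=e^\beta\mathfrak p_n$ with the $n^{-3/2}$ tail, set $\beta_c=-\log\sum_n\mathfrak p_n$, split into defective/null-recurrent/tilted regimes, and combine the resulting asymptotics for constrained partition-function ratios with the scaling limit of the pinned set (trivial, Brownian zero set, or dense) and of the excursions in between to identify the meander, reflecting Brownian motion, and the zero path respectively. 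The decomposition, the role of the Sparre--Andersen / conditioned-walk local limit theorem, and the excursion-gluing identification at criticality all match.

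One place where you are slightly too glib: the claim that $\beta_c>0$, i.e.\ $\sum_{n\ge1}\mathfrak p_n<1$, ``one checks directly.'' This is true but it is not a one-line computation; in \cite{dgz} it is a separate proposition, proved by identifying $\sum_n\mathfrak p_n$ with the density at the origin of the first (weak) descending ladder height of the walk and then bounding it via a Wiener--Hopf / Sparre--Andersen argument. If you intend to verify it ``directly for the Gaussian walk,'' you should say what that computation is, since the positivity of $\beta_c$ is precisely what distinguishes this continuous-state wetting model from its lattice analogue (where $\beta_c=0$) and is not a formality. Apart from flagging that, your outline is a faithful account of the proof in the cited sources.
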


One can ask whether it is possible to build a Markov process on $\mathbb{R}_+^N$ admitting the measure $\mathbb{P}^{f}_{\beta, N}$ as a reversible measure. Due to the presence of Dirac masses in the definition of $\mathbb{P}^{f}_{\beta, N}$, this problem is highly non-trivial. For instance, in the particular case $N=1$ and $\rho \equiv 1$, a natural candidate is given by a sticky reflecting Brownian motion, which is a solution to
\[\begin{split}
\left\{ \begin{array}{ll}
{\displaystyle
d X_t = \frac{1}{2} \d \ell^0_t + \mathbf{1}_{\{X_s > 0 \}} \d B_s
}
\\ \\
\mathbf{1}_{\{X_t = 0 \}} \d t = \frac{e^\beta}{2} \d \ell^0_t.
\end{array} \right.
\end{split}\]
This stochastic equation is known to possess a weak solution, but no strong solutions (see e.g.\ \cite{engelbert2014stochastic}). In \cite{fattler2016construction} and \cite{grothaus18feller} some diffusions having $\mathbb{P}^{f}_{\beta, N}$ as a reversible measure were constructed and studied using sophisticated Dirichlet form methods.

\subsection{Wetting model with a shrinking strip}

A variant of the above wetting model was introduced in \cite{deuschel2019scaling}. Namely, for all $N \geq 1$, and $a>0$, one considers the measure $\mathbb{P}^{f}_{\varphi_{a}, N}$ on $\mathbb{R}_+^N$ defined by
\[\mathbb{P}^{f}_{\varphi_{a}, N}({\rm{d}} \phi) = \frac{1}{Z^{f}_{\varphi_{a},N}} \rho(\phi) \prod_{i =1}^N e^{\varphi_{a}(\phi_i)} \d \phi_i \, , \]
where $\rho$ is as before, and where, for all $a>0$, $\varphi_a: \mathbb{R}_+ \to \mathbb{R}_+$ is a smooth function supported in $[0,a]$. Thus, the measure $\mathbb{P}^{f}_{\beta, N}$ above, which had some atoms, has been replaced by a measure which is absolutely continuous w.r.t.\ the Lebesgue measure on $\mathbb{R}_+^N$. With this new version with a ``strip'', one can however recover a scaling limit result as in the critical regime above.

Let us denote by $\mathbf{P}^{f}_{\varphi_{a}, N}$ the image of $\mathbb{P}^{f}_{\varphi_{a}, N}$ under the map $\Phi_N$. Then,  $\mathbf{P}^{f}_{\varphi_{a_N}, N}$ converges in law, as $N \to \infty$, to the law of a reflecting Brownian motion on $[0,1]$, whenever the pinning functions $(\varphi_a)_{a>0}$ satisfy Condition (A) in \cite{deuschel2019scaling} and $a_N = o(N^{-1/2})$ \cite[Theorem 1.5]{deuschel2019scaling}. Here and in the sequel, we fix such $(\varphi_{a})_{a>0}$ and a sequence $(a_N)_{N \geq 1}$. 
Our aim is to show a tightness result for the dynamics associated with $\mathbb{P}^{f}_{\varphi_{a_N}, N}$, $N \geq 1$.

The advantage of this new model with respect to the $\delta$-pinning measure is the fact that $\mathbb{P}^{f}_{\varphi_{a}, N}$ is absolutely continuous w.r.t.\ the Lebesgue measure on $\mathbb{R}_+^N$, so it is straightforward to construct an associated reversible Markov process. It suffices indeed to consider the corresponding gradient SDE
\begin{equation}
\label{sde_pinning_strip}
\begin{split}
\left\{ \begin{array}{ll}
{\displaystyle
X_i(t) = - \int_0^t \partial_i \mathcal{H}_N (X(s)) \d s + \ell^i_t + \sqrt{2} W^i_t, \quad i =1, \ldots, N}
\\ \\
X_i(t) \geq 0, \, \d \ell^i_t \geq 0, \, \int_0^\infty X_i(t) \d \ell^i_t = 0,
\end{array} \right.
\end{split}
\end{equation}
where $W^1,\ldots, W^N$ are independent Brownian motions, and with a random initial condition $X_0$ distributed as $\mathbb{P}^{f}_{\varphi_{a}, N}$. Above, we have denoted by $\mathcal{H}_N: \mathbb{R}_+^N \to \mathbb{R}$ the potential defined by:
\[ e^{- \mathcal{H}_N(\phi)} = \rho(\phi) \prod_{i=1}^{N} e^{\varphi_{a_N}(\phi_{i})}, \quad \phi \in \mathbb{R}_+^N.\]
It was conjectured in \cite{deuschel2019scaling} that properly rescaled, the sequence of processes $(X_i(t))_{1\le i\le N, t\ge 0}$ given by \eqref{sde_pinning_strip} is tight. In this article, we prove this claim using the Lyons-Zheng decomposition.  After proving tightness and hence achieving a limiting dynamics, a natural step forward might be the following. Since the discrete wetting model of [DO19] is specific while the conjectured dynamics is independent of the pinning mechanism details, one should expect a more robust construction of the static model to which more tools could apply. Having that in mind, we introduce in Chapter 4.2 a continuous version of the strip wetting model. This new model is given by a path measure which is absolutely continuous with respect to the law of a $3$-dimensional Bessel process, with an explicit Radon-Nikodym derivative involving the local times of the latter at a level $\eta>0$. More precisely, we consider the measure 
\[P^{1,\eta}_{a}({\rm{d}} X) = \frac{X_{1} \wedge \eta}{X_{1}} \frac{a}{a \wedge \eta} \exp \left( \frac{1}{2 \eta} L^{\eta}_{1} \right) \, P^{3}_{a} ({\rm{d}} X),\]
where  $P^{3}_{a}$ denotes the law of a $3$-dimensional Bessel process started from $a$ on $[0,1]$, and $L^{\eta}_{1}$ denotes the value, at time $1$, of its local time at the level $\eta$. This measure, as it turns out, corresponds to the law of the unique strong solution on $[0,1]$ to the SDE
$$X_t = a + \int_{0}^{t} \frac{\mathbf{1}_{\{X_{s} \leq \eta\}}}{X_{s}} \d s + B_{t},$$
which corresponds to the SDE of a $3$-Bessel process, but with a truncation at $\eta$ in the drift. We show convergence of this measure to the law $P^{1}_{a}$ of a $1$-dimensional Bessel process started from $a$, as $\eta \to 0$. This result is a continuum version of the static scaling limit theorem of [DGZ05], [CGZ06], and [DO19]. The advantage of considering such a model is that the associated convergence result is much easier to prove than for its discrete counterparts. Moreover, the measures $P^{1,\eta}_{a}$ provide a monotonously continuous interpolation between the probability measures $P^{3}_{a}$ (which corresponds formally to $P^{1,\infty}_{a}$) and $P^{1}_{a}$ (corresponding formally to $P^{1,0}_{a}$), a result interesting in its own respect. Finally, the gradient dynamics associated with a mollified version of $P^{1,\eta}_{a}$ posess an interesting interpretation in terms of an SPDE with reflection combined with an attractive mechanism, which we conjecture to converge to the same limit as the discrete dynamical model.

\subsection{Structure of this paper}

In Section \ref{sec_tightness} of this paper we prove tightness for the sequence of processes $(X_i(t))_{1\le i\le N, t\ge 0}$ given by \eqref{sde_pinning_strip}. In Section \ref{sec_ibpf} we write the associated integration by parts formula, which motivates a conjecture on the corresponding limit in law. In Section \ref{sec_continuum} we introduce a continuous version of the wetting model and prove that it converges towards the law of the modulus of a Brownian motion. We also write the associated gradient dynamics in terms of an SPDE with reflection combined with an attractive mechanism, and formulate a conjecture for the corresponding limit in law.

\section{A tightness result}
\label{sec_tightness}

In  Section 1.5 of \cite{deuschel2019scaling}, the first and third author considered the processes $(Y^N_t)_{t \geq 0}, N \geq 1$, 
where $Y^{N}_{t} = \Phi_{N} (X(N^2t))$, $(X(t))_{t \geq 0}$ is the reversible evolution in $\mathbb{R}_+^N$ for the pinning measure $\mathbb{P}^{f}_{\varphi_{a}, N}$ as given by \eqref{sde_pinning_strip}, and $\Phi_{N}$ as in \eqref{scaling_and_interp_map} above.
%
Here and below we shall denote the inner product on $H=L^2(0,1)$ by $\langle \cdot, \cdot \rangle$, and set
\[ K := \{h \in H, \quad h \geq 0 \quad \text{a.e.} \}. \]
For all $\gamma > 0$, we introduce the space $H^{-\gamma}(0,1)$, completion of $H=L^{2}(0,1)$ w.r.t.\ the norm $\| \cdot\|_{-\gamma}$ defined by
\[ \|f \|^{2}_{-\gamma} := \sum_{n=1}^{\infty} n^{-2\gamma} | \langle f, e_{n} \rangle |^{2}, \qquad f \in H, \]
where 
\begin{equation}
\label{def_e_n}
e_{n}(\theta) :=\sqrt{2} \sin(n \pi \theta), \qquad \theta \in [0,1]
\end{equation}

\begin{thm}
\label{thm_tightness_discrete_wetting}
For all $T>0$, the family of processes $(Y^{N}_{\cdot})_{N \geq 1}$ is tight in $C([0,T],H^{-1}(0,1))$.
\end{thm}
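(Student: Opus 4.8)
The plan is to prove tightness via the Lyons--Zheng decomposition, as the authors announce. First I would recall that $(X(t))_{t\ge 0}$ solving \eqref{sde_pinning_strip} is a stationary, reversible diffusion on $\mathbb{R}_+^N$ with invariant measure $\mathbb{P}^f_{\varphi_{a_N},N}$. For a reversible diffusion, any $F(X(t))$ with $F$ sufficiently regular admits a forward--backward martingale decomposition
\[
F(X(t)) - F(X(0)) = \tfrac12 M^F_t - \tfrac12 \bigl( \widehat M^F_T - \widehat M^F_{T-t}\bigr),
\]
where $M^F$ is a forward martingale w.r.t.\ the natural filtration, $\widehat M^F$ is a martingale w.r.t.\ the time-reversed filtration, and both have the same quadratic variation, governed by $|\nabla F|^2$ integrated against the invariant measure. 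The key point is that this identity entirely bypasses the drift term $-\partial_i \mathcal{H}_N$ and the local time terms $\ell^i$ (which are of bounded variation and absorbed into the reversibility structure), so the only thing one needs to control is the martingale part, i.e.\ a Brownian-type term whose bracket is explicitly computable under the stationary measure.

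Next I would test against the eigenfunctions $e_n$: for each fixed $n$, consider $F_n^N(\phi) := \langle \Phi_N(\phi), e_n\rangle$, which is a linear functional of $\phi$, hence very smooth, and its discrete gradient has an explicitly bounded norm — roughly $\|\nabla F_n^N\|^2 \lesssim n^2$ after the $N^2$ time-rescaling is taken into account (this is the standard ``spectral gap of the discrete Laplacian'' bookkeeping; the interpolation map $\Phi_N$ together with the diffusive scaling $t \mapsto N^2 t$ produces exactly the factor that makes $\langle Y^N_t, e_n\rangle$ have increments with bracket $\sim n^2 (t-s)$, uniformly in $N$). Applying Lyons--Zheng to $F_n^N$ gives, for the rescaled process, $\langle Y^N_t, e_n\rangle - \langle Y^N_s, e_n\rangle$ as a difference of two martingale increments each with quadratic variation bounded by $C n^2 (t-s)$ uniformly in $N$. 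By the Burkholder--Davis--Gundy inequality this yields a moment bound of the form
\[
\mathbb{E}\bigl[ |\langle Y^N_t, e_n\rangle - \langle Y^N_s, e_n\rangle|^{2p}\bigr] \le C_p\, n^{2p}\, |t-s|^p
\]
for all $p\ge 1$, uniformly in $N$ and in $0\le s,t\le T$. Here the stationarity of $X_0 \sim \mathbb{P}^f_{\varphi_{a_N},N}$ is used to control the bracket expectations uniformly.

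From these coordinate-wise estimates one assembles tightness in $H^{-1}$. Summing the bound $\mathbb{E}|\langle Y^N_t - Y^N_s, e_n\rangle|^2 \le C n^2|t-s|$ against the weights $n^{-2\gamma}$ defining $\|\cdot\|_{-\gamma}$ converges as soon as $\gamma > 3/2$; interpolating with a uniform-in-$N$ bound on $\mathbb{E}\|Y^N_t\|_{-1}^2$ (which follows from the static scaling limit / the second-moment estimates already available for $\mathbb{P}^f_{\varphi_{a_N},N}$, giving $\mathbb{E}|\langle Y^N_t,e_n\rangle|^2 \le C$ uniformly — or more crudely $\le Cn^0$) then upgrades the increment bound to $\mathbb{E}\|Y^N_t - Y^N_s\|_{-1}^{2p} \le C_p|t-s|^{p\alpha}$ for some $\alpha>1$ with $p$ large. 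Combined with tightness of the one-time marginals $Y^N_t$ in $H^{-1}$ (again from the uniform static moment bounds plus compactness of the embedding $H^{-\gamma_0}\hookrightarrow H^{-1}$ for $\gamma_0<1$), Kolmogorov's tightness criterion in $C([0,T],H^{-1})$ concludes. The main obstacle I anticipate is not the Lyons--Zheng step itself but the uniform-in-$N$ control of the quadratic-variation expectations and of the one-time marginal moments: one must verify that the pinning weights $\prod_i e^{\varphi_{a_N}(\phi_i)}$ with the shrinking strip $a_N = o(N^{-1/2})$ do not destroy the uniform second-moment bounds one would have for the plain reflecting (Lebesgue-on-$\mathbb{R}_+^N$) dynamics — this is exactly where Condition (A) of \cite{deuschel2019scaling} and the static scaling limit should be invoked, and care is needed because $\varphi_{a_N}$ is not bounded uniformly in $N$ in the relevant normalization. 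A secondary subtlety is justifying the Lyons--Zheng decomposition rigorously for the reflected diffusion with local-time terms, which may require an approximation of the reflecting dynamics by penalized (smooth-drift) dynamics and a passage to the limit.
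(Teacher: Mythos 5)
Your overall plan — Lyons--Zheng decomposition plus moment bounds on martingale increments, then a tightness criterion in $C([0,T],H^{-1})$ — is indeed the route the paper takes. But there is a substantive error in your key quantitative step, and it propagates so that the argument as written does not close.

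The error is in the bracket estimate. You claim that after the $N^2$ time-rescaling the quadratic variation of $\langle Y^N_t,e_n\rangle$ is $\sim n^2(t-s)$, attributing the $n^2$ to ``spectral gap of the discrete Laplacian'' bookkeeping. This is not so. The Lyons--Zheng decomposition cancels the drift (and hence the discrete Laplacian and the local-time terms) entirely; what remains is only the carr\'e du champ of the linear functional $\psi=\langle h,\cdot\rangle$, which involves \emph{first} derivatives, not second. Concretely, the Revuz measure computation gives $\langle M^{[\psi]}\rangle_t = 2N^2\sum_{k=1}^N\langle h,x_k^N\rangle^2\,t$, and since each rescaled basis vector satisfies $\|x_k^N\|^2\leq 2N^{-2}$ with at most two-fold overlap of supports, a Cauchy--Schwarz bound yields $\langle M^{[\psi]}\rangle_t\leq C\|h\|_{L^2}^2\,t$, \emph{uniformly in frequency}. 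For $h=e_n$ this is $Ct$, with no $n$-dependence: the diffusive scaling $N^2$ exactly cancels the $N^{-2}$ from $\|x_k^N\|^2$ and nothing is left over. The $n^2$ you introduce is a confusion with the eigenvalues of the generator (which control the drift that Lyons--Zheng removes), not the carr\'e du champ.

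This matters because your fallback ``interpolation'' then fails quantitatively. Interpolating $\mathbb{E}|\langle Y^N_t-Y^N_s,e_n\rangle|^{2p}\lesssim\min(n^{2p}|t-s|^p,1)$ against the $n^{-2}$ weights of $H^{-1}$ gives $\mathbb{E}\|Y^N_t-Y^N_s\|_{H^{-1}}^{2p}\lesssim|t-s|^{1/2}$ \emph{for every} $p$ (the crossover scale $n\sim|t-s|^{-1/2}$ makes the exponent in $|t-s|$ saturate at $1/2$, independently of $p$), which is strictly below the exponent $>1$ needed for Kolmogorov's criterion. So the scheme as written does not prove tightness in $C([0,T],H^{-1})$. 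With the correct, $n$-independent bracket bound, one gets $\mathbb{E}\|Y^N_t-Y^N_s\|_{H^{-1}}^p\leq C_p|t-s|^{p/2}$ for all $p\geq 2$, which suffices. (The paper then closes with the Ethier--Kurtz tightness criterion rather than Kolmogorov, but that choice is not the issue; either works once the exponent is right.)

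Two of your anticipated subtleties are also non-issues once the Dirichlet-form framework is used directly, as the paper does: (i) the Fukushima/Lyons--Zheng machinery applies to the reflected diffusion without any penalization or approximation of the local-time terms; and (ii) the bracket bound $\leq C\|h\|^2 t$ is \emph{deterministic} (the carr\'e du champ is constant in $x$), so the pinning weights $\prod_i e^{\varphi_{a_N}(\phi_i)}$ never enter the increment estimate at all. They enter only in establishing tightness of the time-$t$ marginals, which follows from the static convergence $\mathbf{P}^f_{\varphi_{a_N},N}\to P^1_0$ of \cite{deuschel2019scaling}.
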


\begin{proof}
Recall that 
$\Phi_N: \mathbb{R}^N \to H$ is the function defined by \eqref{scaling_and_interp_map}, and $H_N = \Phi_N(H)$. Let us denote by $(x^{i}_{N})_{1\leq i \leq N}$ the image of the canonical basis of $\mathbb{R}^N$ under $\Phi_{N}$. It then follows that, for all $N \geq 1$, the process $\left( Y^{N}_{t} \right)_{t \geq 0}$ coincides in law with the reversible Markov process associated with the Dirichlet form $(\text{Dom}(\mathcal{E}^{f}_{\varphi_{a_N}, N}), \mathcal{E}^{f}_{\varphi_{a_N}, N})$ which is the closure of the form
\[ \mathcal{E}^{f}_{\varphi_{a_N}, N}(u,v) = N^{2} \int_{K_{N}} \sum_{i=1}^{N} \langle \nabla u(x), x^{i}_{N} \rangle \langle \nabla v(x), x^{i}_{N} \rangle \d \mathbf{P}^{f}_{\varphi_{a_N}, N}(x),  \quad u,v \in C^{1}_{b}(H_{N}), \]
where $K_{N} := \{ x \in H_{N}, \, x \geq 0 \}$  and $\mathbf{P}^{f}_{\varphi_{a_N}, N}$ is the image of the measure $\mathbb{P}^{f}_{\varphi_{a_N}, N}$ under $\Phi_{N}$. Then, for all $T>0$ and $h \in H$, by the Lyons-Zheng decomposition , see e.g.\ Thm 5.7.1 in \cite{fukushima2010dirichlet}, we have
\[ \langle Y^{N}_{t}, h \rangle - \langle Y^{N}_{0}, h \rangle = \frac{1}{2} M^{1}_{t} - \frac{1}{2} \left(M^{2}_{T} - M^{2}_{T-t} \right), \]
where $M^{i}$ is an $H_{N}$-valued $(\mathcal{F}^{i}_{t})_{t \geq 0}$ martingale, $\mathcal{F}^{1}_{t} = \sigma (Y^{N}_{s}, s \leq t)$ and $\mathcal{F}^{2}_{t} = \sigma (Y^{N}_{T-s}, s \leq t)$. More precisely, defining $\psi \in C^{1}_{b}(H_{N})$ by $\psi:= \langle h , \cdot \rangle$, by Theorem 5.7.1 in \cite{fukushima2010dirichlet}, we have the above decomposition with
\[M^{1}_{t} := M^{[\psi]}_{t}, \quad M^{2}_{t}(\omega) := M^{[\psi]}_{t}(r_{T} \omega), \]
where $r_{T}$ is the time-reversing operator on the canonical space $\Omega := C([0,T], K_N)$:
\[ (r_{T} \omega)_{t} = \omega_{T-t}, \qquad \omega \in \Omega, \, t \in [0,T]. \]
Moreover, $M^{[\psi]}$ denotes the martingale additive functional appearing in the Fukushima decomposition of the continuous additive functional (CAF) given by
\[A^{[\psi]}_{t} := \psi(Y^{N}_{t}) - \psi(Y^{N}_{0}) = \langle Y^{N}_{t}, h \rangle - \langle Y^{N}_{0}, h \rangle, \quad t \geq 0, \]
see Theorem 5.2.2. Hence, the quadratic variation of the martingales $M^1$ and $M^2$ is given by the sharp bracket $\langle M^{[\psi]} \rangle_{t}$ of the martingale additive functional $M^{[\psi]}$. The latter is a positive continuous additive functional with Revuz measure $\mu_{\psi}$ satisfying
\[ \int_{K_{N}} f(x)  \d \mu_{\psi}(x) = 2  \mathcal{E}^{f}_{\varphi_{a_N}, N}(\psi f, \psi) -  \mathcal{E}^{f}_{\varphi_{a_N}, N}(\psi^{2},f), \]
for all $f \in \text{Dom}(\mathcal{E}^{f}_{\varphi_{a}, N})$, see Theorem 5.2.3 in \cite{fukushima2010dirichlet}. But, for all $f \in C^{1}_{b}(H_{N})$, by the Leibniz rule, and recalling that $\nabla \psi =h$, we have
\[2  \mathcal{E}^{f}_{\varphi_{a_N}, N}(\psi f, \psi) -  \mathcal{E}^{f}_{\varphi_{a_N}, N}(\psi^{2},f) = 2 N^{2} \int_{K} f(x) \sum_{k=1}^{N} \langle h, x^{N}_{i} \rangle^{2} \, \mathbf{P}^{f}_{\varphi_{a_N}, N}({\rm{d}} x).\]
Therefore, for all $f \in C^{1}_{b}(H_{N})$, it holds
\[ \int_{K} f(x) \mu_{\psi}({\rm{d}} x) = \int_{K} f(x) \left(2 N^{2} \sum_{k=1}^{N} \langle h, x^{N}_{i} \rangle^{2} \right) \mathbf{P}^{f}_{\varphi_{a_N}, N}({\rm{d}} x). \]
Therefore, we deduce that
\[\mu_{\psi}({\rm{d}} x) =  \left(2 N^{2} \sum_{k=1}^{N} \langle h, x^{N}_{i} \rangle^{2} \right) \mathbf{P}^{f}_{\varphi_{a_N}, N}({\rm{d}} x).\]
Hence, by the Revuz correspondence, we deduce the equality
\[ \langle M^{[\psi]} \rangle_{t} = \left(2 N^{2} \sum_{k=1}^{N} \langle h, x^{N}_{i} \rangle^{2} \right) \, t, \quad t \geq 0 \]
in the sense of additive functionals, which implies that, for all $i=1,2$
\begin{equation}
\label{express_qv}
\langle M^{i} \rangle_{t} = 2 N^{2} \sum_{k=1}^{N} \langle h, x^{N}_{i} \rangle^{2} \, t, \quad t \geq 0.
\end{equation}
But recall that, for all $k=1, \ldots, N$, $x^{N}_{k}=\Phi_{N} (e_{k})$, where $(e_{1}, \ldots, e_{N})$ is the canonical basis of $\mathbb{R}^{N}$. In words, $x^{N}_{k}$ is the function in $H_{N}$ which takes the value $\frac{1}{\sqrt{N}}$ at the point $k/N$, and the value $0$ at the points $j/N$, $j \neq k$. Note in particular that
\[ 0 \leq x^{N}_{k} \leq \frac{1}{\sqrt{N}} \mathbf{1}_{[\frac{k-1}{N}, \frac{k+1}{N}]}, \]
so that we have the bound $\|x^{N}_{k}\|^{2} \leq 2 N^{-2}$. Using the Cauchy-Schwarz inequality followed by the latter bound, we deduce from \eqref{express_qv} that
\[\langle M^{i} \rangle_{t} \leq 4 ||h||^{2} \, t. \]
Hence, by the BDG inequality, for all $p \geq 1$, there exists a constant $C_{p}>0$ (depending only on $p$) such that, for all $t \geq s \geq 0$
\[ \left( \mathbb{E} \left[ \langle Y^{N}_{t} - Y^{N}_{s} , h \rangle^{p} \right] \right)^{1/p} \leq C_{p} (t-s)^{1/2} \, ||h||\]
The above being true for any $h \in H$, we deduce that, for all $p \geq 2$
\[ \begin{split}
\left( \mathbb{E} \left[\|Y^{N}_{t} - Y^{N}_{s} \|^{p}_{H^{-1}(0,1)} \right] \right)^{1/p} &= \left( \mathbb{E} \left[ \left( \sum_{k=1}^{\infty} \langle Y^{N}_{t} - Y^{N}_{s}, e_{k} \rangle^{2} k^{-2} \right)^{p/2} \right] \right)^{1/p} \\
& \leq \zeta(2)^{\frac{1}{2}-\frac{1}{p}} \left( \mathbb{E} \left[ \sum_{k=1}^{\infty} \langle Y^{N}_{t} - Y^{N}_{s}, e_{k} \rangle^{p} k^{-2} \right] \right)^{1/p} \\
&\leq \zeta(2)^{1/2} C_{p} (t-s)^{1/2} \\
& \leq C'_{p} (t-s)^{1/2},
\end{split}\]
where we applied Jensen's inequality to obtain the second line, and $C'_{p} = \zeta(2)^{1/2} C_{p}$, with $\zeta(2) = \sum_{k =1}^\infty k^{-2}$. We have thus obtained, for all $p \geq 2$, the following bound holding uniformly in $t,s \in [0,T]$
\[ \left( \mathbb{E} \left[\|Y^{N}_{t} - Y^{N}_{s} \|^{p}_{H^{-1}(0,1)} \right] \right)^{1/p} \leq C'_{p} |t-s|^{1/2}. \]
Moroever, for all $t \geq 0$, $Y^N_t \overset{(d)}{=} \mathbf{P}^{f}_{\varphi_{a_N}, N}$ and, by Theorem 1.5 in \cite{deuschel2019scaling}, $\mathbf{P}^{f}_{\varphi_{a_N}, N} \underset{N \to \infty}{\longrightarrow} P^{1}_{0}$ in law, where $P^{1}_{0}$ is the law of a reflecting Brownian bridge started from $0$ on $[0,1]$. Hence, since $H^{-1}(0,1)$ is a Polish space, invoking  \cite[Thm 7.2, Chap. 3]{MR838085}, we deduce that the sequence of processes $(Y^{N}_{t})_{t \in [0,T]}, N \geq 1$ is tight in $C([0,T], H^{-1}(0,1))$.
\end{proof}

\section{An integration by parts formula and conjecture for the scaling limit}

\label{sec_ibpf}

Let $C^{1}_{b} ( \mathbb{R}^N)$ be the set of continuously differentiable functions on $\mathbb{R}^{N}$ with bounded derivatives. For any $f \in C^{1}_{b} ( \mathbb{R}^N)$ and $h \in \mathbb{R}^{N}$,
we denote by $\partial_{h} f$ the derivative of $f$ in the direction $h$
\[ \partial_{h} f(\phi) := \underset{\epsilon \to 0}{\lim} \, \frac{f(\phi+\epsilon h) - f(\phi)}{\epsilon}, \quad \phi \in \mathbb{R}^{N}. \]
We then have the following integration by parts formula (IbPF) for the measure $\mathbb{P}^{f}_{\varphi_{a}, N}$ on $\mathbb{R}^{N}$:

\begin{prop}
\label{prop_ibpf_pinning}
For all $f \in C^{1}_{b} ( \mathbb{R}^{N})$ and $h \in \mathbb{R}^{N}$, we have
\begin{equation}
\label{ibpf_setting_strip} 
\begin{split}
 \int_{\mathbb{R}_{+}^{N}} \partial_{h} f(\phi) \, \mathbb{P}^{f}_{\varphi_{a}, N}({\rm{d}} \phi) &= \sum_{i=1}^{N} h_{i} \left( \int_{0}^{a} e^{\varphi_{a}(b)} \frac{\d}{\d b} \sigma^{N}_{i} (f|b) \d b   -  \sigma^{N}_{i} (f|a) \right) \\
& -  \int_{\mathbb{R}_{+}^{N}} f(\phi)  \sum_{i=1}^{N} \phi_{i} \, \left( h_{i+1} + h_{i-1} - 2 h_{i} \right) \, \mathbb{P}^{f}_{\varphi_{a}, N}({\rm{d}} \phi).
\end{split},  
\end{equation}
where $h_0=h_{N+1}=0$ and for all $b \geq 0$ and $i=1,\ldots,N$, $\sigma^{N}_{i} (f|b) = \int_{\mathbb{R}_{+}^{N}} f(\phi) \sigma^{N}_{i} ({\rm{d}} \phi|b)$, where,
\[ \begin{split}
\sigma^{N}_{i} ({\rm{d}} \phi |b)
&:= e^{-\varphi_{a}(b)} \,  \frac{\mathbb{P}^{f}_{\varphi_{a}, N}(\phi_{i} \in \d b)}{\d b} \, \mathbb{P}^{f}_{\varphi_{a}, N} \left( {\rm{d}} \phi | \phi_{i} = b \right) \\
&= \frac{1}{Z^{f}_{\varphi_{a},N}} \rho(\phi) \delta_{b}({\rm{d}} \phi_{i}) \prod_{n \neq i} e^{\varphi_{a}(\phi_{n})} \d \phi_{n},
\end{split}\]   
or, formally written 
\[ \sigma^{N}_{i} ({\rm{d}} \phi |b) = e^{-\varphi_{a}(b)} \mathbf{1}_{\{b\}}(\phi_i) \, \mathbb{P}^{f}_{\varphi_{a}, N}({\rm{d}} \phi). \]
\end{prop}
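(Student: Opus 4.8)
The plan is to establish \eqref{ibpf_setting_strip} first in the special case $h = e_{i}$, the $i$-th vector of the canonical basis of $\mathbb{R}^{N}$, and then to recover the general case from the linearity of both sides in $h$. In that case $\partial_{h} f = \partial_{i} f$, and the idea is a one-dimensional integration by parts in the variable $\phi_{i}$ over the half-line $[0,\infty)$, all the other coordinates being kept fixed. Writing $\mathbb{P}^{f}_{\varphi_{a}, N}(\d \phi) = (Z^{f}_{\varphi_{a},N})^{-1} \rho(\phi) \prod_{n=1}^{N} e^{\varphi_{a}(\phi_{n})} \mathbf{1}_{\{\phi_{n} \geq 0\}} \, \d \phi_{n}$, the boundary term at $\phi_{i} = +\infty$ vanishes, since $\rho$ decays like a Gaussian in $\phi_{i}$ while $f$ is bounded, whereas the boundary term at $\phi_{i} = 0$ is retained; differentiating the density with respect to $\phi_{i}$ then produces two contributions, one from $\rho$ and one from the factor $e^{\varphi_{a}(\phi_{i})}$.

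For the contribution of $\rho$, since $V(x) = x^{2}/2$ one has $\partial_{i} \rho / \rho = \phi_{i-1} + \phi_{i+1} - 2 \phi_{i}$ for $1 \leq i \leq N-1$ (with $\phi_{0} = 0$) and $\partial_{N} \rho / \rho = \phi_{N-1} - \phi_{N}$, there being no interaction bond to the right of $\phi_{N}$. Multiplying by $h_{i}$, summing over $i$, and integrating against $f \, \d \mathbb{P}^{f}_{\varphi_{a}, N}$, a discrete summation by parts — the symmetry of the discrete Laplacian, with the Dirichlet condition $\phi_{0} = 0$ at the pinned left end and a free (Neumann-type) condition at the right end — rewrites $\sum_{i} h_{i} \, \partial_{i} \rho / \rho$ as $\sum_{i} \phi_{i} (h_{i+1} + h_{i-1} - 2 h_{i})$, which gives the last term of \eqref{ibpf_setting_strip}.

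It then remains to match the $\phi_{i} = 0$ boundary term, together with the contribution $-\int f \, \varphi_{a}'(\phi_{i}) \, \d \mathbb{P}^{f}_{\varphi_{a}, N}$ coming from differentiating $e^{\varphi_{a}(\phi_{i})}$, with the pinned expression $\int_{0}^{a} e^{\varphi_{a}(b)} \frac{\d}{\d b} \sigma^{N}_{i}(f|b) \, \d b - \sigma^{N}_{i}(f|a)$. Disintegrating $\mathbb{P}^{f}_{\varphi_{a}, N}$ along the coordinate $\phi_{i}$, one reads off from the definition of $\sigma^{N}_{i}(\cdot|b)$ that the $\phi_{i} = 0$ boundary term equals $-e^{\varphi_{a}(0)} \sigma^{N}_{i}(f|0)$, and that $\int f \, \varphi_{a}'(\phi_{i}) \, \d \mathbb{P}^{f}_{\varphi_{a}, N} = \int_{0}^{a} \varphi_{a}'(b) \, e^{\varphi_{a}(b)} \, \sigma^{N}_{i}(f|b) \, \d b$, the range of integration shrinking to $[0,a]$ because $\varphi_{a}$ is supported there. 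On the other hand, an integration by parts in $b$ over $[0,a]$, using that $\varphi_{a}$ is smooth and supported in $[0,a]$, so that $\varphi_{a}(a) = 0$ and $e^{\varphi_{a}(a)} = 1$, gives
\[ \int_{0}^{a} e^{\varphi_{a}(b)} \frac{\d}{\d b} \sigma^{N}_{i}(f|b) \, \d b - \sigma^{N}_{i}(f|a) = -e^{\varphi_{a}(0)} \sigma^{N}_{i}(f|0) - \int_{0}^{a} \varphi_{a}'(b) \, e^{\varphi_{a}(b)} \, \sigma^{N}_{i}(f|b) \, \d b, \]
which is exactly the sum of the two preceding contributions, with matching signs. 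Assembling the three pieces and summing against $h$ yields \eqref{ibpf_setting_strip}.

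The only conceptually non-routine step is the last one: recognising that the hard-wall boundary term at $\phi_{i} = 0$ and the pinning drift $\varphi_{a}'(\phi_{i})$ combine, through the integration by parts in $b$, into the single pinned quantity appearing in \eqref{ibpf_setting_strip}. Everything else is bookkeeping; the two points that still deserve a little care are the endpoint conventions in the discrete summation by parts (the pinned left end $\phi_{0} = 0$ versus the free right end, hence the Dirichlet/Neumann asymmetry), and the differentiation under the integral sign defining $\frac{\d}{\d b} \sigma^{N}_{i}(f|b)$, which is immediate since $V$ is quadratic — hence smooth with Gaussian tails — and $f \in C^{1}_{b}(\mathbb{R}^{N})$.
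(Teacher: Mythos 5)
Your proof follows the same route as the paper's: write $\mathbb{P}^{f}_{\varphi_a,N}$ as an explicit density, integrate by parts in $\phi$ over $\mathbb{R}_+^N$ (the paper does it in the direction $h$ at once, you do it coordinate by coordinate — an equivalent bookkeeping), split the derivative of the log-density into the $\rho$-term, the hard-wall boundary term at $\phi_i=0$, and the pinning term $\varphi_a'(\phi_i)$, and recombine the last two by a one-dimensional integration by parts in $b$ over $[0,a]$ using $\varphi_a(a)=0$. The reading of $\int f\,\varphi_a'(\phi_i)\,\mathrm{d}\mathbb{P}$ as $\int_0^a\varphi_a'(b)e^{\varphi_a(b)}\sigma^N_i(f|b)\,\mathrm{d}b$ and the cancellation of the two $e^{\varphi_a(0)}\sigma^N_i(f|0)$ terms are exactly as in the paper.

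However, the "symmetry of the discrete Laplacian" you invoke to pass from $\sum_i h_i\,\partial_i\rho/\rho$ to $\sum_i\phi_i(h_{i+1}+h_{i-1}-2h_i)$ does not hold under the boundary conventions in play. With $\phi_0=0$ and no bond to the right of $\phi_N$ (so that $\partial_N\rho/\rho=\phi_{N-1}-\phi_N$), but $h_0=h_{N+1}=0$ on the $h$-side, a direct computation gives
\[
\sum_{i=1}^{N} h_i\,\frac{\partial_i\rho}{\rho}
 \;=\; \sum_{i=1}^{N}\phi_i\,(h_{i+1}+h_{i-1}-2h_i)\;+\;h_N\phi_N,
\]
i.e.\ a residual boundary term $h_N\phi_N$ survives. (Sanity check for $N=1$, $\varphi_a\equiv 0$: the left side is $-h_1\phi_1$ while the right sum is $-2h_1\phi_1$, and the integration-by-parts identity $\int_0^\infty f'\,e^{-\phi_1^2/2}\,\mathrm{d}\phi_1 = -f(0)+\int_0^\infty f\,\phi_1\,e^{-\phi_1^2/2}\,\mathrm{d}\phi_1$ confirms that the correct coefficient of $\int f\phi_1\,\mathrm{d}\mathbb{P}$ is $h_1$, not $2h_1$.) This discrepancy is not yours alone — the paper's own proof "recognizes" the same identity without justification — but your appeal to symmetry makes the claim explicit and hence incorrect as stated. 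Formula \eqref{ibpf_setting_strip} holds only when $h_N=0$; otherwise the right-hand side must be supplemented by $-\,h_N\int f\,\phi_N\,\mathrm{d}\mathbb{P}^{f}_{\varphi_a,N}$. In the downstream application (Prop.~\ref{prop_ibpf_pinning_path}) one takes $h\in C^2_c(0,1)$, so $\bar h_N=0$ for $N$ large and the missing term is immaterial, but at the level of this proposition you should either restrict to $h_N=0$ or carry the extra term.
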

Above $\frac{\mathbb{P}^{f}_{\varphi_{a}, N}({\phi_{i}} \in \d b)}{\d b} $ denotes the density at $b$ of the law of $\phi_i$ (under the measure $\mathbb{P}^{f}_{\varphi_{a}, N}$) with respect to the Lebesgue measure on $\mathbb{R}_+$.  Heuristically, the measures $\sigma^{N}_{i}({\rm{d}} \phi | b)$ are meant to be a discrete analog of the measures $\Sigma^{1}_{r} ({\rm{d}} X | b)$, $r \in (0,1), b \geq 0,$ introduced in \cite{EladAltman2019}, see Def.\ 3.4 therein. In particular we remark some resemblance of the above IbPF with formula (4.4) of \cite{EladAltman2019}. Indeed, the first term in the right-hand side of \eqref{ibpf_setting_strip} only involves values of $\sigma^{N}_{i}( f | b)$ for $b$ close to $0$ (namely $b \leq a$), and is somewhat reminiscent of the term
\[ \int_0^1 \d r h_r \frac{\d^2}{\d a^2} \Sigma^{1}_{r} (\Phi(X) | a) \biggr \rvert_{a=0} \]
in the right-hand side of (4.4) in \cite{EladAltman2019}.

\begin{proof}
Recalling the definition of $\mathbb{P}^{f}_{\varphi_{a}, N}$, and integrating by parts with respect to the Lebesgue measure on $\mathbb{R}_+^N$, we have
\[\begin{split}
\int_{\mathbb{R}_{+}^{N}} \partial_{h} f(\phi) \, \mathbb{P}^{f}_{\varphi_{a}, N}({\rm{d}} \phi) = &- \frac{1}{Z^{f}_{\varphi_{a},N}} \int_{\mathbb{R}_{+}^{N}} f(\phi) \, \partial_{h} \rho(\phi) \prod_{n=1}^{N} e^{\varphi_{a}(\phi_{n})} \d \phi_{n} \\
&- \sum_{i=1}^{N} h_i \frac{1}{Z^{f}_{\varphi_{a},N}} \int_{\mathbb{R}_{+}^{N}} f(\phi) \rho(\phi) e^{\varphi_{a}(0)} \delta_{0}({\rm{d}} \phi_i) \prod_{\substack{n=1 \\ n \neq i}}^{N} e^{\varphi_{a}(\phi_{n})} \d \phi_{n} \\
&- \sum_{i=1}^{N} h_i \frac{1}{Z^{f}_{\varphi_{a},N}} \int_{\mathbb{R}_{+}^{N}} f(\phi) \rho(\phi) \varphi_{a}'(\phi_i) \prod_{n=1}^{N} e^{\varphi_{a}(\phi_{n})} \d x_{n}.
\end{split}\]
We recognize in the first term of the right-hand side above the quantity
\[-  \int_{\mathbb{R}_{+}^{N}} f(\phi)  \sum_{i=1}^{N} \left( h_{i+1} + h_{i-1} - 2 h_{i} \right) \phi_{i} \, \mathbb{P}^{f}_{\varphi_{a}, N}({\rm{d}} \phi)\]
On the other hand, the second term can be rewritten
\[ - \sum_{i=1}^{N} h_i \, e^{\varphi_{a}(0)} \, \sigma^{N}_{i}(f|0). \]
Finally, the third term can be rewritten
\[ - \sum_{i=1}^{N} h_i \int_0^a \frac{d}{db} \left( e^{\varphi_{a}(b)} \right) \sigma^{N}_{i}(f|b) \, \d b, \]
or, after an integration by parts:
\[ - \sum_{i=1}^{N} h_i \left\{\sigma^{N}_{i}(f|a) - e^{\varphi_{a}(0)} \, \sigma^{N}_{i}(f|0) -  \int_0^a  e^{\varphi_{a}(b)} \frac{d}{db} \left(\sigma^{N}_{i}(f|b) \right)  \, \d b \right\}. \]
Adding up all three quantities, and noting the cancellation of $\sum_{i=1}^{N} h_i \, e^{\varphi_{a}(0)} \, \sigma^{N}_{i}(f|0)$,
we obtain the claim.
\end{proof}

Thanks to the above discrete IbPF, we obtain an IbPF for the rescaled wetting dynamics on the space of paths $H$. 
For convenience of the presentation, and with abuse of notation, we assume here that $\mathbf{P}^{f}_{\varphi_{a}, N}$ is the image of the measure 
$\mathbb{P}^{f}_{\varphi_{a}, N}$ under $\tilde\Phi_N$, the left-continuous with right-hand limits (or c\`agl\`ad) piecewise constant interpolation modification of $\Phi_N$: 
$\tilde\Phi_N(\phi)(y):=\frac{1}{\sqrt{N}}\phi_{\lceil N y \rceil}$.
We denote by $\Pi_N : H \to \tilde H_N$ the orthogonal projection of $H$ onto the space $\tilde H_N = \tilde\Phi_N(\mathbb{R}^N)$, and by $C^{1}_{b} (H)$ be the set of continuously Fr\'{e}chet differentiable functions on $H$ with bounded Fr\'{e}chet differential.

\begin{prop}
\label{prop_ibpf_pinning_path}
For all $\Psi \in C^{1}_{b} (H)$ and $h \in C^2_c(0,1)$, we have
\begin{equation}
\label{ibpf_pinning_path}
\begin{split}
 \int_{H} \partial_{\Pi_N h} \Psi(\zeta) \, \mathbf{P}^{f}_{\varphi_{a}, N}({\rm{d}} \zeta) &= 
  \int_{0}^1 (\Pi_N h)_{y} D^N_{\Psi}(y)\d y
 -  \int_{H} \Theta^N_h(\zeta) \, \Psi(\zeta) \, \mathbf{P}^{f}_{\varphi_{a}, N}({\rm{d}} \zeta),
\end{split}
\end{equation}
where 
\[
D^N_{\Psi}(y)= N^{3/2}\left( \int_{0}^{a} e^{\varphi_{a}(b)} \frac{\d}{\d b} \sigma^{N}_{\lceil yN \rceil} (\Psi \circ {\tilde{\Phi}}_N |b) \d b   -  \sigma^{N}_{\lceil yN \rceil} (\Psi \circ {\tilde{\Phi}}_N |a) \right)
\]
and for all $\zeta \in H$
\[ \Theta^N_h(\zeta) = N^2 \int_0^1 \zeta_{y} \, \Delta^N_h(y) \d y\]
for 
\[
\Delta^N_h(y) =  h_{y+\frac{1}{N}} \mathbf{1}_{\{y\le 1-\frac{1}{N}\}} + h_{y-\frac{1}{N}}\mathbf{1}_{\{y\ge \frac{1}{N}\}} - 2 h_y. 
\]
\end{prop}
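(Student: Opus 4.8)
The plan is to derive the path-space identity \eqref{ibpf_pinning_path} from the finite-dimensional formula \eqref{ibpf_setting_strip} of Proposition \ref{prop_ibpf_pinning} by transporting the latter along the linear interpolation map $\tilde\Phi_N$. Since $\tilde\Phi_N : \mathbb{R}^N \to H$ is linear and bounded, for every $\Psi \in C^1_b(H)$ the composition $f := \Psi \circ \tilde\Phi_N$ lies in $C^1_b(\mathbb{R}^N)$, and the chain rule gives $\partial_v f(\phi) = \partial_{\tilde\Phi_N v} \Psi(\tilde\Phi_N \phi)$ for every $v \in \mathbb{R}^N$. I would then single out the vector $v = v^{(N)}(h) \in \mathbb{R}^N$ characterised by $\tilde\Phi_N v = \Pi_N h$: on $(\tfrac{i-1}{N}, \tfrac iN]$ the function $\tilde\Phi_N v$ equals the constant $v_i / \sqrt N$, whereas $\Pi_N h$, being the $L^2$-orthogonal projection onto the functions that are piecewise constant along this partition, equals there the mean $N \int_{(i-1)/N}^{i/N} h$; hence $v_i = N^{3/2} \int_{(i-1)/N}^{i/N} h(y)\,\d y$, equivalently $v_i/\sqrt N = (\Pi_N h)_{i/N}$. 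Using that $\mathbf{P}^{f}_{\varphi_{a}, N}$ is the image of $\mathbb{P}^{f}_{\varphi_{a}, N}$ under $\tilde\Phi_N$, a change of variables gives
\[
\int_H \partial_{\Pi_N h}\Psi(\zeta)\, \mathbf{P}^{f}_{\varphi_{a}, N}(\d\zeta) = \int_{\mathbb{R}_+^N} \partial_{v}(\Psi\circ\tilde\Phi_N)(\phi)\, \mathbb{P}^{f}_{\varphi_{a}, N}(\d\phi),
\]
and applying Proposition \ref{prop_ibpf_pinning} with this $v$ and $f = \Psi\circ\tilde\Phi_N$ rewrites the right-hand side as the sum of a ``boundary'' term $\sum_{i=1}^N v_i\big(\int_0^a e^{\varphi_a(b)}\tfrac{\d}{\d b}\sigma^N_i(\Psi\circ\tilde\Phi_N\,|\,b)\,\d b - \sigma^N_i(\Psi\circ\tilde\Phi_N\,|\,a)\big)$ and a ``discrete Laplacian'' term $-\int_{\mathbb{R}_+^N} (\Psi\circ\tilde\Phi_N)(\phi)\sum_{i=1}^N \phi_i(v_{i+1}+v_{i-1}-2v_i)\,\mathbb{P}^{f}_{\varphi_{a}, N}(\d\phi)$, with the convention $v_0=v_{N+1}=0$.

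It then remains to identify these two terms with those on the right-hand side of \eqref{ibpf_pinning_path}. For the boundary term, both $(\Pi_N h)_y$ and $D^N_\Psi(y)$ are constant on $(\tfrac{i-1}{N},\tfrac iN]$ (since $\lceil yN\rceil = i$ there), equal respectively to $v_i/\sqrt N$ and to $D^N_\Psi(i/N) = N^{3/2}\big(\int_0^a e^{\varphi_a(b)}\tfrac{\d}{\d b}\sigma^N_i(\Psi\circ\tilde\Phi_N\,|\,b)\,\d b - \sigma^N_i(\Psi\circ\tilde\Phi_N\,|\,a)\big)$; integrating over each subinterval and summing yields $\int_0^1 (\Pi_N h)_y\, D^N_\Psi(y)\,\d y = \sum_{i=1}^N v_i\big(\cdots\big)$, which is the first term. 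For the Laplacian term, I would compute, for $\zeta = \tilde\Phi_N\phi$,
\[
\Theta^N_h(\tilde\Phi_N\phi) = N^2\int_0^1 (\tilde\Phi_N\phi)_y\,\Delta^N_h(y)\,\d y = N^{3/2}\sum_{i=1}^N \phi_i \int_{(i-1)/N}^{i/N}\Delta^N_h(y)\,\d y,
\]
and evaluate the last integral by the substitutions $z = y \pm \tfrac1N$ in the three pieces of $\Delta^N_h$; extending $h$ by $0$ outside $(0,1)$ absorbs the indicators $\mathbf{1}_{\{y\le 1-1/N\}}$ and $\mathbf{1}_{\{y\ge 1/N\}}$, giving $\int_{(i-1)/N}^{i/N}\Delta^N_h = \int_{i/N}^{(i+1)/N}h + \int_{(i-2)/N}^{(i-1)/N}h - 2\int_{(i-1)/N}^{i/N}h = N^{-3/2}(v_{i+1}+v_{i-1}-2v_i)$. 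Hence $\Theta^N_h(\tilde\Phi_N\phi) = \sum_{i=1}^N \phi_i(v_{i+1}+v_{i-1}-2v_i)$, and pushing this identity forward under $\tilde\Phi_N$ turns the Laplacian term into $-\int_H \Theta^N_h(\zeta)\,\Psi(\zeta)\,\mathbf{P}^{f}_{\varphi_{a}, N}(\d\zeta)$. Collecting the two contributions yields \eqref{ibpf_pinning_path}.

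The argument is essentially a bookkeeping exercise, and the only point requiring some care is the treatment of the endpoints $i \in \{1, N\}$: there the indicators in $\Delta^N_h$ truncate the shifted integrals, but this is precisely consistent with the boundary convention $h_0 = h_{N+1} = 0$ (equivalently $v_0 = v_{N+1} = 0$) of Proposition \ref{prop_ibpf_pinning}, and is harmless in any case since $h \in C^2_c(0,1)$ vanishes near $\{0,1\}$, so that $v_1 = v_N = 0$ for $N$ large. I do not expect any substantive obstacle beyond this bookkeeping.
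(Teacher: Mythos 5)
Your proposal is essentially identical to the paper's proof: both identify the vector $v = \bar h$ with components $v_i = N^{3/2}\int_{(i-1)/N}^{i/N} h$ so that $\tilde\Phi_N v = \Pi_N h$, apply the finite-dimensional formula \eqref{ibpf_setting_strip} to $f = \Psi\circ\tilde\Phi_N$ and this vector, and then translate the three resulting terms back to the path space via a change of variables. Your version merely spells out more explicitly the integral computation $\int_{(i-1)/N}^{i/N}\Delta^N_h = N^{-3/2}(v_{i+1}+v_{i-1}-2v_i)$, which the paper leaves as an asserted identity; there is no substantive difference in approach.
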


\begin{proof}
First note that 
\[(\Pi_N h)_y = N \int_{\frac{\lceil yN \rceil - 1}{N}}^{\frac{\lceil yN \rceil}{N}}h(u) \, \d u, \qquad y \in [0,1]. \]
In particular $\Pi_N h = \tilde{\Phi}_N(\bar{h})$, where $\bar{h} \in \mathbb{R}^N$ is given by 
\[\bar{h}_i = N^{3/2} \int_{\frac{i-1}{N}}^{\frac{i}{N}} h(u) \, \d u, \qquad i=1, \ldots, N.\]  
Now, by the IbPF \eqref{ibpf_setting_strip} applied to the functional $\Psi \circ \tilde{\Phi}_N$ and the vector $\bar{h}$, we have
\[\begin{split}
 \int_{\mathbb{R}_{+}^{N}} \partial_{\bar{h}} (\Psi \circ \tilde{\Phi}_N)(\phi) \, \mathbb{P}^{f}_{\varphi_{a}, N}({\rm{d}} \phi) &= \sum_{i=1}^{N} \bar{h}_{i} \left( \int_{0}^{a} e^{\varphi_{a}(b)} \frac{\d}{\d b} \sigma^{N}_{i} (\Psi \circ \tilde{\Phi}_N|b) \d b   -  \sigma^{N}_{i} (\Psi \circ \tilde{\Phi}_N|a) \right) \\
& -  \int_{\mathbb{R}_{+}^{N}} \Psi (\tilde{\Phi}_N(\phi))  \sum_{i=1}^{N} \phi_{i} \, \left( \bar{h}_{i+1} + \bar{h}_{i-1} - 2 \bar{h}_{i} \right) \, \mathbb{P}^{f}_{\varphi_{a}, N}({\rm{d}} \phi).
\end{split},\]  
with the convention $\bar{h}_0=\bar{h}_{N+1}=0$. Since $\tilde{\Phi}_N(\bar{h})=\Pi_N h $, $\partial_{\bar{h}} (\Psi \circ \tilde{\Phi}_N) = \partial_{\Pi_N h} \Psi$. Therefore
\[\int_{\mathbb{R}_{+}^{N}} \partial_{\bar{h}} (\Psi \circ \tilde{\Phi}_N)(\phi) \, \mathbb{P}^{f}_{\varphi_{a}, N}({\rm{d}} \phi)  = \int_{H} \partial_{\Pi_N h} \Psi(\zeta) \, \mathbf{P}^{f}_{\varphi_{a}, N}({\rm{d}} \zeta). \]
On the other hand, by the definition of $\bar{h}$,
\[\sum_{i=1}^{N} \bar{h}_{i} \left( \int_{0}^{a} e^{\varphi_{a}(b)} \frac{\d}{\d b} \sigma^{N}_{i} (\Psi \circ \Phi_N|b) \d b   -  \sigma^{N}_{i} (\Psi \circ \Phi_N|a) \right) = \int_{0}^1 (\Pi_N h)_{y} D^N_{\Psi}(y)\d y. \]
Finally, for the last term, noting that
\[ \sum_{i=1}^{N} \phi_{i} \, \left( \bar{h}_{i+1} + \bar{h}_{i-1} - 2 \bar{h}_{i} \right) =  N^2 \int_0^1 \tilde{\Phi}_N(\phi)_y \, \Delta^N_h(y) \, \d y,  \]
we obtain
\[\begin{split}
\int_{\mathbb{R}_{+}^{N}} \Psi (\tilde{\Phi}_N(\phi))  \sum_{i=1}^{N} \phi_{i} \, \left( \bar{h}_{i+1} + \bar{h}_{i-1} - 2 \bar{h}_{i} \right) \, \mathbb{P}^{f}_{\varphi_{a}, N}({\rm{d}} \phi) &= \int_{\mathbb{R}_{+}^{N}} \Psi (\tilde{\Phi}_N(\phi)) N^2 \int_0^1 \tilde{\Phi}_N(\phi)_y \Delta^N_h(y) \, \d y\\
&=\int_{H} \Psi (\zeta)  \Theta^N_h(\zeta)  \, \mathbf{P}^{f}_{\varphi_{a}, N}({\rm{d}} \zeta).
\end{split}\]
The formula follows.
\end{proof}

\begin{rk}
Note that, if we choose $\varphi_a$ and $a=a_N$ as before so that $\mathbf{P}^{f}_{\varphi_{a}, N} \underset{N \to \infty}{\longrightarrow} P^1_0$, then, resaoning as in \cite[Chap. 6.6]{zambotti2017random}, we deduce that
 \[ \int_{H} \partial_{\Pi_N h} \Psi(\zeta) \, \mathbf{P}^{f}_{\varphi_{a}, N}({\rm{d}} \zeta) \underset{N \to \infty}{\longrightarrow} \int_{H} \partial_{h} \, \Psi(\zeta) \, P^1_0 ({\rm{d}} \zeta)  \]
and
\[\int_{H} \Theta^N_h(\zeta) \, \Psi(\zeta) \, \mathbf{P}^{f}_{\varphi_{a}, N}({\rm{d}} \zeta) \underset{N \to \infty}{\longrightarrow} \int_{H} \int_0^1 \zeta_r \, h''_r \d r \, \Psi(\zeta) \, P^1_0 ({\rm{d}} \zeta).  \]
Therefore, in view of equality (4.4) of Theorem 4.1 in \cite{EladAltman2019}, it would be tempting to conjecture that the first term in the right-hand side of \eqref{ibpf_pinning_path} converges as $N \to \infty$ to
\begin{equation} 
\label{second_derivative_exp}
\frac{1}{4} \int_{0}^{1} h_{r} \, \frac{{\rm d}^{2}}{{\rm d} a^{2}} \, \Sigma^1_r (\Phi(X) \, | \, a) \, \biggr\rvert_{a=0}  \d r, 
\end{equation}
where, for all $a \geq 0$,
\[\Sigma^1_r (\Phi (X) \, | \, a) := p^{1}_{r}(a) \,
 P^{1}_0 [ {\rm d} X \,| \, X_{r} = a], \]
with $p^1_r(a) := \sqrt{\frac{2}{ \pi r}} \exp \left(-\frac{a^2}{2r} \right)$ denoting the density of $X_r$ when $X$ is a reflecting Brownian motion started from $0$. Recall that, if $X$ is a reflecting Brownian, then for all $\lambda >0$, $X \overset{(d)}{=}R_{\lambda}(X)$, where $(R_{\lambda}(X))_r := \frac{1}{\sqrt{\lambda}} X_{\lambda r}$, $r \geq 0$. Hence, we have the scaling property
\[\frac{{\rm d}^{2}}{{\rm d} a^{2}} \, \Sigma^1_r (\Phi(X) \, | \, a) \, \biggr\rvert_{a=0} = \lambda^{3/2} \frac{{\rm d}^{2}}{{\rm d} a^{2}} \, \Sigma^1_{\lambda r} (\Phi \circ R_{\lambda}(X)) \, | \, a) \, \biggr\rvert_{a=0}. \]
This scaling property is compatible with the scaling factor $N^{3/2}$ appearing in the definition of $D^N_{\Psi}$.
\end{rk}

\subsection{Conjecture for the scaling limit}
\label{subsect_conjec_scaling_limit}

Above we have shown the tightness of the family of processes $(Y^N_t)_{t \in [0,T]}$, $N \geq 1$. We make the following conjecture for the corresponding, expected, limit in law. Let us denote by $(u_t)_{t \geq 0}$ the reversible Markov process associated with the Dirichlet form $\mathcal{E}$ generated by the bilinear form
\[ \mathcal{E}(f,g) := \frac{1}{2} \int \langle \nabla f , \nabla g \rangle \d \mu, \qquad f,g \in \mathcal{F} \mathcal{C}^{\infty}_{b}(K),
\]
where $\mu$ denotes the law, on $K=\{h \in H, \quad h \geq 0 \quad \text{a.e.} \}$, of a reflecting Brownian motion on $[0,1]$. Such a process can be constructed using exactly the same techniques as in Section 5 of \cite{EladAltman2019} for the case of the modulus of a Brownian bridge. We consider the process $(u_t)_{t \geq 0}$ started from equilibrium, i.e.\ $u_0 \overset{(d)}{=} \mu$. Moreover, arguing as in Theorem 5.7 of \cite{EladAltman2019}, one can show that $(u_t)_{t \geq 0}$ satisfies an equation of the form:
\[\frac{\partial u}{\partial t}=\frac 12
\frac{\partial^2 u}{\partial x^2}
 - \frac{1}{4} \, \underset{\epsilon \to 0}{\lim} \, \rho''_{\epsilon}(u) + \xi, \]
where $\xi$ is a space-time white noise on $\mathbb{R}_+ \times [0,1]$, and $(\rho_{\epsilon})_{\epsilon >0}$ is an appropriate approximation of the Dirac measure $\delta_0$.

\begin{cnj}
For all $T>0$, as $N \to \infty$, $(Y^N_t)_{t \in [0,T]}$ converges in law in $C([0,T], H^{-1}(0,1))$ to the process $(u_t)_{t \in [0,T]}$.
\end{cnj}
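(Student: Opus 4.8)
The plan is to combine the tightness of Theorem~\ref{thm_tightness_discrete_wetting} with an identification of all subsequential limits. Each $Y^N$ is stationary, being started from its reversible law $\mathbf{P}^{f}_{\varphi_{a_N}, N}$, and by Theorem~1.5 of \cite{deuschel2019scaling} the measure $\mathbf{P}^{f}_{\varphi_{a_N}, N}$ converges in law to $\mu$, the law of a reflecting Brownian motion on $[0,1]$; hence any limit point of $(Y^N)_{N\ge1}$ in $C([0,T],H^{-1}(0,1))$ is automatically a stationary process with one-time marginal $\mu$. It therefore suffices to show that the finite-dimensional distributions of $Y^N$ converge to those of the $\mu$-symmetric process $(u_t)$ associated with $\mathcal{E}$; combined with tightness this gives convergence in law on path space. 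To establish this, and to bypass the well-posedness issues of a martingale-problem formulation --- the semigroup of $\mathcal{E}$ not being known to be strong Feller --- I would prove convergence of the Dirichlet forms.

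Concretely, one views $(\mathrm{Dom}(\mathcal{E}^{f}_{\varphi_{a_N}, N}),\mathcal{E}^{f}_{\varphi_{a_N}, N})$ on $L^2(\mathbf{P}^{f}_{\varphi_{a_N}, N})$ and $(\mathrm{Dom}(\mathcal{E}),\mathcal{E})$ on $L^2(\mu)$ as forms living over a convergent sequence of Hilbert spaces $L^2(\mathbf{P}^{f}_{\varphi_{a_N}, N})\to L^2(\mu)$ --- legitimate since $\mathbf{P}^{f}_{\varphi_{a_N}, N}\to\mu$ weakly with the uniform control provided by \cite{deuschel2019scaling} --- and proves Mosco convergence in the generalised sense allowing the underlying $L^2$ spaces to vary (following Kuwae and Shioya): a lower-bound inequality $\liminf_N\mathcal{E}^{f}_{\varphi_{a_N}, N}(u_N)\ge\mathcal{E}(u)$ whenever $u_N\to u$ weakly, together with, for each $u$ in the core $\mathcal{F}\mathcal{C}^\infty_b(K)$, a recovery sequence $u_N\to u$ strongly with $\mathcal{E}^{f}_{\varphi_{a_N}, N}(u_N)\to\mathcal{E}(u)$. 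The recovery sequence is obtained by projecting $u$ onto $H_N$, and the convergence of the Dirichlet energies then reduces to the Riemann-sum identity $N^2\sum_k\langle h,x^N_k\rangle^2\to\|h\|^2$ already used in the proof of Theorem~\ref{thm_tightness_discrete_wetting}, combined with the weak convergence of the reference measures; the lower bound follows along the same lines by lower semicontinuity. Generalised Mosco convergence yields strong convergence of the associated $L^2$-semigroups, hence convergence of the finite-dimensional distributions of $Y^N$ towards those of $(u_t)$, and one concludes.

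A route closer to the structure of this paper would be to pass to the limit directly in the Lyons--Zheng decomposition of $\langle Y^N_t,h\rangle$ for $h\in C^2_c(0,1)$: the forward and backward martingale parts have quadratic variation $2N^2\sum_k\langle h,x^N_k\rangle^2\,t$, which converges by the computation in Theorem~\ref{thm_tightness_discrete_wetting}, while the drift of the Fukushima decomposition is rewritten, via the IbPF of Proposition~\ref{prop_ibpf_pinning_path}, in terms of the contact functional $D^N_\Psi$ and the discrete Laplacian term $\Theta^N_h$. One would then show $\Theta^N_h(\zeta)\to\int_0^1\zeta_r h''_r\d r$ and that $D^N_\Psi(y)$ converges to the renormalised boundary term appearing in \eqref{second_derivative_exp}, strongly enough to pass to the limit inside the time and space integrals, thereby identifying the limit with the solution of $\partial_t u=\frac12\partial_x^2 u-\frac14\lim_{\epsilon\to0}\rho''_\epsilon(u)+\xi$, that is with $(u_t)$.

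Either way the obstacle is the same, and it is precisely what keeps the statement conjectural: controlling the singular pinning/contact term in the limit. In the Mosco picture this is hidden in proving that the forms converge \emph{with the right domain}, even though the measures $\mathbf{P}^{f}_{\varphi_{a_N}, N}$ are mutually singular and charge near-contact configurations in a way that must generate, in the limit, the local-time part of the generator of $\mathcal{E}$. In the Lyons--Zheng picture it is the convergence of $N^{3/2}\sigma^N_{\lceil yN\rceil}(\,\cdot\,|\,b)$, for $b\le a_N$, to the boundary measure $\Sigma^1_y$, uniformly in $y$ and against test functionals --- which requires delicate estimates on the local time near zero of the discrete and continuum reflecting Brownian motions, exactly the kind of control that the strong Feller property, or the methodology of \cite{zambotti2004fluctuations}, would ordinarily supply but which, as recalled in the introduction, is unavailable in the present regime.
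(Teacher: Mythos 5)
The statement you were asked about is stated in the paper as a conjecture, not a theorem: the authors give no proof, and immediately after it they explain the obstruction that keeps it open. Their discussion sketches essentially your second route (rescale the IbPF of Proposition~\ref{prop_ibpf_pinning_path} and try to pass to the IbPF (4.4) of \cite{EladAltman2019}, then argue as in \cite{zambotti2004fluctuations}), and it records two reasons this fails at present: it is not clear that the rescaled discrete IbPF converges to the continuum one, and the method of \cite{zambotti2004fluctuations} relies on a uniform continuity/strong Feller property for the limiting semigroup which is open for $\mathcal{E}$. You reach the same diagnosis, so your proposal is an honest account of why the statement is conjectural rather than a proof of it; this is the correct conclusion.

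Your first route, generalised Mosco convergence \`a la Kuwae--Shioya over the varying spaces $L^2(\mathbf{P}^{f}_{\varphi_{a_N},N})\to L^2(\mu)$, is a genuinely different idea that the paper does not mention and is worth recording. Two cautions if you wanted to push it further. First, the Riemann-sum identity $N^2\sum_k\langle h,x^N_k\rangle^2\to\|h\|^2$ takes care of the kinetic part of the forms, but the singular pinning is encoded in the reference measures, which are mutually singular and concentrate near the boundary; the lim-inf inequality and the construction of a recovery sequence \emph{adapted to the limiting domain} are precisely where the contact/local-time contribution must appear, so the hard step has not disappeared but been relocated. Second, generalised Mosco convergence yields strong convergence of the $L^2$-semigroups, which, combined with stationarity and the tightness of Theorem~\ref{thm_tightness_discrete_wetting}, does give convergence of finite-dimensional distributions of the stationary processes, but you should also say why the limit form is $\mathcal{E}$ with the right domain (a Markov-uniqueness-type statement for $\mathcal{E}$), since otherwise one only identifies the limit semigroup on a core. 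These are not objections to the soundness of your picture; they pinpoint, as you yourself conclude, exactly what a proof would have to supply.
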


A natural route to prove the above conjecture would be to show that, after rescaling, the IbPF of Prop. \ref{prop_ibpf_pinning} above converges to the IbPF (4.4) of \cite{EladAltman2019} for the law of the reflecting Brownian motion, and use the same techniques as in \cite{zambotti2004fluctuations} to deduce therefrom the convergence of the associated evolutions. Unfortunately, in spite of some similarities between the two IbPF, it is not clear at all that the former converges to the latter. Another problem that arises is related with the distributional nature of the last term appearing in these IbPF. Finally, an important feature exploited in \cite{zambotti2004fluctuations} is the uniform continuity of the Markov semi-groups. In our case, this feature is non-trivial, and would in particular imply the strong Feller property for the Markov semigroup associated with $(u_t)_{t \geq 0}$, which is an open problem.

\section{A wetting model in the continuum}
\label{sec_continuum}

In this section we introduce an analog of the wetting model in the continuum, which corresponds to the law of a $3$-dimensional Bessel process tilted by a functional of its local times.

\subsection{Motivation: wetting model and local times}

Recall that, in the discrete setting described above, for the case of a wetting model with a strip, we have
\[ \mathbb{P}^{f}_{\varphi_{a}, N}({\rm{d}} \phi) = \frac{1}{Z^{+}_{\varphi_{a},N}} \exp \left( \sum_{i=1}^{N} \varphi_{a}(\phi_{i}) \right) \mathbb{P}^{+}_{N}({\rm{d}} \phi), \]
where $\mathbb{P}^{+}_{N}$ is the law of a standard Gaussian walk on $\mathbb{R}^{N}$ conditioned to remain nonnegative, and $Z^{+}_{\varphi_{a},N}$ is a normalisation constant. For all $a > 0$, $\varphi_{a}$ is a smooth function. For the sake of simplicity, consider however
\begin{equation}
\label{simple_exp}
 \varphi_{a} = \beta_{a} \mathbf{1}_{[0,a]},
\end{equation}
where the sequence $(\beta_{a})_{a > 0}$ is such that
\[ a e^{\beta_{a}} \underset{a \to 0}{\longrightarrow} e^{\beta_{c}}.  \]
Note that this condition ensures that the following convergence of measures holds in the weak sense on $\mathbb{R}_{+}$:
\[ e^{\varphi_{a}(x)} \d x \underset{a \to 0}{\longrightarrow} e^{\beta_{c}} \delta_{0}({\rm{d}} x) + \d x.\]
With the ansatz \eqref{simple_exp}, we can rewrite the wetting measure with strip as
\[ \mathbb{P}^{f}_{\varphi_{a}, N}({\rm{d}} \phi) = \frac{1}{Z^{+}_{\varphi_{a},N}} \exp \left( \beta_{a} \sum_{i=1}^{N} \mathbf{1}_{[0,a]}(\phi_{i}) \right) \mathbb{P}^{+}_{N}({\rm{d}} \phi). \]
Thus, the wetting measure corresponds to the measure $\mathbb{P}^{+}_{N}$ tilted by the local time in the strip $[0,a]$ of the random walk. We could hope that such a description be stable under taking the scaling limit: in the continuum, the law $P^{1}_{0}$ of a reflected Brownian motion started from $0$ would correspond to the law $m$ of a Brownian meander tilted by some appropriate functional of its continuous local time process. As such, this claim is false, since the probability measures $P^{1}_{0}$ is not absolutely continuous with respect to $m$. However, me may ask whether

\begin{equation}
\label{limit_representation}
P^{1}_{0}({\rm{d}} X) = \underset{\eta \to 0}{\lim} \, \exp(\Phi_{\eta} (L(X))) \, m ({\rm{d}} X),
\end{equation}
where $L(X) = \left( L^{b}_{t} (X) \right)_{b \geq 0, t \geq 0}$ is the local time process associated with $(X_{t})_{0 \leq t \leq 1}$, when $X \overset{(d)}{=} m$, and $\Phi_{\eta}, \eta >0$, are appropriate functionals on $C([0,1] \times \mathbb{R}_{+}, \mathbb{R})$. Note that we could not hope to choose $\Phi_{\eta}$ to depend solely on $L^{0}(X)$, the local time at $0$ of $X$, since this identically vanishes when $X \overset{(d)}{=} m$: we need instead to make it depend on the process $L^\eta(X)$ for $\eta$ close to $0$.

\subsection{Static approximation result}

To obtain a representation of the form \eqref{limit_representation}, we shall proceed as follows. Let $C([0,1])$ be the space of continuous function on $[0,1]$ endowed with the topology of supremum norm and the associated Borel $\sigma$-algebra. For all $a \geq 0$, we denote by $P^{3}_{a}$ (resp. $P^{1}_{a}$) the law on $C([0,1])$ of a $3$-dimensional (resp. $1$-dimensional) Bessel process started from $a$, while $m$ denotes the law of a Brownian meander. Recall that $P^{3}_{0}$ and $m$ are mutually absolutely continuous on $C([0,1])$. Moreover, under $P^{3}_{a}$, the canonical process $(X_{t})_{0 \leq t \leq 1}$ satisfies the SDE
\begin{equation}
\label{sde_3_bes}
X_{t} = a + \int_{0}^{t} \frac{\d s}{X_{s}} + B_{t}.
\end{equation}
On the other hand, under $P^{1}_{a}$, the canonical process satisfies the equation
\[X_{t} = a + \frac{1}{2} L^{0}(X)_{t} + B_{t}. \]
Hence, the idea is to approximate the latter equation by an SDE of the form
\begin{equation}
\label{sde_approx_1_bes}
X^{\eta}_{t} = a + \int_{0}^{t} f_{\eta}(X_{s}) \d s + B_{t}, 
\end{equation}
where $f_{\eta}$ is a well-chosen function such that \eqref{sde_approx_1_bes} is obtained from \eqref{sde_3_bes} through a Girsanov transform. 
Then, for any fixed $\eta >0$, one can apply Girsanov's theorem to obtain
\[ P^{1,\eta}_{a} ({\rm{d}} X) = \exp(\Phi_{\eta} (L(X))) \, P^{3}_{a} ({\rm{d}} X), \]
where $P^{1,\eta}_a$ is the law of $X^{\eta}$, and $\Phi_{\eta}$ is some functional depending on $\eta$.
This strategy is implemented in the next theorem.


\begin{thm}
\label{conv}
The convergence $P^{1,\eta}_{a} \underset{\eta \to 0}{\longrightarrow} P^{1}_{a}$ holds in the sense of weak topology for probability measures on $C([0,1])$, where, for all $\eta >0$
\[P^{1,\eta}_{a}({\rm{d}} X) = \frac{X_{1} \wedge \eta}{X_{1}} \frac{a}{a \wedge \eta} \exp \left( \frac{1}{2 \eta} L^{\eta}_{1} \right) \, P^{3}_{a} ({\rm{d}} X).\]
Here, for a $3$-Bessel process $X$, $(L^{b}_{t})_{b \geq 0, t \geq 0}$ denotes the semimartingale local time process associated with $X$, and we are using the convention $ \frac{0}{0 \wedge \eta} = 1$. In particular $P^{1,\eta}_{0} \underset{\eta \to 0}{\longrightarrow} P^{1}_{0}$, where
\[P^{1,\eta}_{0}({\rm{d}} X) = \sqrt{\frac{2}{\pi}} (X_{1} \wedge \eta) \exp \left( \frac{1}{2 \eta} L^{\eta}_{1} \right) \, m ({\rm{d}} X),\]
and where $m$ is the law of a Brownian meander on $[0,1]$.
\end{thm}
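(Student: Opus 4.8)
The plan is to derive the Radon–Nikodym derivative of $P^{1,\eta}_a$ with respect to $P^3_a$ explicitly via Girsanov, and then pass to the limit $\eta\to0$. First I would identify the function $f_\eta$ in \eqref{sde_approx_1_bes} that interpolates between the $3$-Bessel drift $1/x$ and the reflecting drift. The natural choice, matching the displayed SDE in the introduction, is $f_\eta(x) = \frac{1}{x}\mathbf{1}_{\{x\le\eta\}}$, i.e. the $3$-Bessel drift truncated above level $\eta$. With this choice the Girsanov density of $P^{1,\eta}_a$ with respect to $P^3_a$ is
\[
\exp\left( -\int_0^1 \frac{\mathbf{1}_{\{X_s>\eta\}}}{X_s}\,\d B_s - \frac12\int_0^1 \frac{\mathbf{1}_{\{X_s>\eta\}}}{X_s^2}\,\d s\right),
\]
where $B$ is the driving Brownian motion of the $3$-Bessel process under $P^3_a$. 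The key computational step is to rewrite this stochastic exponential in a local-time form. Using $\d X_s = \frac{1}{X_s}\d s + \d B_s$ to substitute $\d B_s = \d X_s - \frac{1}{X_s}\d s$, then applying Itô's formula to $\log(X_s\vee\eta)$ (or to $g(X_s)$ for a suitable $g$ that is affine below $\eta$ and logarithmic above), the stochastic integral collapses into boundary terms $\log X_1 - \log(X_1\vee\eta)$-type expressions plus a local-time contribution $\frac{1}{2\eta}L^\eta_1$ coming from the kink of $g$ at $\eta$. This is precisely where the factors $\frac{X_1\wedge\eta}{X_1}$, $\frac{a}{a\wedge\eta}$ and $\exp(\frac{1}{2\eta}L^\eta_1)$ in the statement come from; one also needs the occupation-time identity $\int_0^1 \frac{\mathbf{1}_{\{X_s>\eta\}}}{X_s^2}\d s$ being finite $P^3_a$-a.s., which holds since a $3$-Bessel process spends zero time at $0$ and the integrand is bounded by $\eta^{-2}$.

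Next I would verify the normalisation, i.e. that the stated density integrates to $1$ under $P^3_a$; this follows automatically from the fact that $X^\eta$ defined by the truncated SDE is a genuine (nonexploding) diffusion on $[0,1]$, which one checks by a Feller-test / comparison argument (the drift $\frac{1}{x}\mathbf{1}_{\{x\le\eta\}}$ still pushes the process away from $0$ strongly enough that $0$ is entrance-not-exit, exactly as for the $3$-Bessel process). Having the explicit density, the convergence $P^{1,\eta}_a\to P^1_a$ would be proved by showing that the density converges, as $\eta\to0$, to the density of $P^1_a$ with respect to $P^3_a$. The latter is classical: $P^1_a$ is absolutely continuous with respect to $P^3_a$ on $C([0,1])$ with density proportional to $\frac{1}{X_1}\exp\bigl(-\frac{1}{2}\int_0^1 X_s^{-2}\d s + \dots\bigr)$ — more usefully, one uses the known absolute continuity relation between the reflecting Brownian motion (i.e. $1$-Bessel) and the $3$-Bessel process, e.g. $\frac{\d P^1_a}{\d P^3_a}\big|_{\mathcal F_1} = \frac{a}{X_1}$ type formula (for $a>0$), obtained from the fact that $1/X$ is a local martingale under $P^3$. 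So the task reduces to checking the pointwise (in $\omega$) convergence
\[
\frac{X_1\wedge\eta}{X_1}\,\frac{a}{a\wedge\eta}\,\exp\left(\frac{1}{2\eta}L^\eta_1\right) \underset{\eta\to0}{\longrightarrow} \frac{a}{X_1}
\]
(the case $a=0$ being handled via the meander with the extra $\sqrt{2/\pi}$ constant), together with a uniform integrability statement to upgrade pointwise convergence of densities to weak convergence of the measures. The pointwise limit is believable: $\frac{a}{a\wedge\eta}\to1$, $\frac{X_1\wedge\eta}{X_1} = \frac{\eta}{X_1}$ eventually (once $\eta<X_1$), and $\frac{\eta}{X_1}\exp(\frac{1}{2\eta}L^\eta_1)$ should converge using that $L^\eta_1\to L^0_1 = 0$ as $\eta\to0$ for the $3$-Bessel process, but at a controlled rate — in fact one expects $\frac{1}{2\eta}L^\eta_1$ to converge in law, not pointwise, so some care is needed here.

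The main obstacle is exactly this last point: $\frac{1}{\eta}L^\eta_1$ does \emph{not} converge almost surely to a constant under $P^3_a$; rather, by the Ray–Knight description / second-order behaviour of Bessel local times, $\frac{1}{2\eta}L^\eta_1$ fluctuates, and its exponential is what produces, in the limit, the Radon–Nikodym derivative $\frac{a}{X_1}$ — which is itself a nondegenerate random variable. So the convergence must be established in a distributional/integrated sense rather than pathwise: I would instead test against bounded continuous functionals $F$ on $C([0,1])$ and show $\int F\,\d P^{1,\eta}_a \to \int F\,\d P^1_a$ directly, by writing the left side as $E_{P^3_a}\bigl[F(X)\,\frac{X_1\wedge\eta}{X_1}\frac{a}{a\wedge\eta}e^{L^\eta_1/(2\eta)}\bigr]$ and using either an explicit computation via the joint law of $(X_1, L^\eta_1)$ under $P^3_a$ (available in closed form from excursion theory or the Ray–Knight theorems, or from the known transition density of the $3$-Bessel process absorbed/reflected), or a martingale argument identifying $\frac{X_1\wedge\eta}{X_1}\frac{a}{a\wedge\eta}e^{L^\eta_1/(2\eta)}$ with $E_{P^3_a}[\frac{a}{X_1}\mid\text{something}]$ plus an error vanishing as $\eta\to0$. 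Controlling the exponential moments of $L^\eta_1/\eta$ uniformly in $\eta$ — needed for any uniform integrability argument — is the technically delicate part, and I would handle it via the exponential formula for Bessel local times (the Laplace transform of $L^\eta_1$ is explicit and one can check $E_{P^3_a}[\exp((1+\delta)L^\eta_1/(2\eta))]$ stays bounded for small $\delta>0$ as $\eta\to0$).
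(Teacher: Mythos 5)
Your derivation of the Radon--Nikodym density via Girsanov and the It\^o--Tanaka formula applied to a function with a kink at $\eta$ is correct and is essentially the first half of the paper's proof. The second half of your plan, however, rests on a false premise. You propose to show that the density $\frac{X_1\wedge\eta}{X_1}\frac{a}{a\wedge\eta}\exp(\frac{1}{2\eta}L^\eta_1)$ converges to $\frac{\mathrm{d}P^1_a}{\mathrm{d}P^3_a}\big\rvert_{\mathcal F_1}=\frac{a}{X_1}$ and then upgrade to weak convergence by uniform integrability. But for $a>0$ the measures $P^1_a$ and $P^3_a$ are \emph{mutually singular} on $C([0,1])$: under $P^1_a$ the canonical process (modulus of a Brownian motion from $a$) hits zero before time $1$ with positive probability, while under $P^3_a$ this event is null. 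Precisely because $1/X_t$ is a \emph{strict} local martingale under $P^3_a$, one has $E^{P^3_a}[a/X_1]=P^1_a(T_0>1)<1$, so $\frac{a}{X_1}\,\mathrm{d}P^3_a$ is only a sub-probability: it is the restriction of $P^1_a$ to paths avoiding zero, coming from the $h$-transform relation $\frac{\mathrm{d}P^3_a}{\mathrm{d}P^1_a}\big\rvert_{\mathcal F_1}=\frac{X_1}{a}\mathbf 1_{\{T_0>1\}}$. Any argument based on convergence of densities plus uniform integrability can at best recover this absolutely continuous piece; the mass that in the limit concentrates on zero-hitting paths is invisible to that method. The same obstruction occurs for $a=0$ with respect to the meander $m$, since $m$-a.s.\ the path stays positive on $(0,1]$ whereas $P^1_0$-a.s.\ it hits zero on any neighbourhood of time $0$.

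The paper avoids this by never leaving the pathwise SDE picture. Using the comparison theorem, the strong solutions $X^\eta$ of the truncated SDEs form a coupling that is a.s.\ monotone nonincreasing as $\eta\downarrow 0$, hence converges pointwise to a nonnegative process $X$. Tightness in $C([0,1])$ is obtained from moment estimates on $Z^\eta=(X^\eta)^2$, which solves a squared-Bessel-type SDE. The limit is then identified: comparison gives $Z^\eta\geq Z^0$ where $Z^0$ is a squared $1$-Bessel process, hence $Z:=\lim Z^\eta\geq Z^0>0$ for a.e.\ $t$, so $\mathbf 1_{\{Z^\eta_s\leq\eta^2\}}\to 0$ a.e.; dominated convergence shows $Z$ solves the squared $1$-Bessel SDE, and strong uniqueness yields $Z=Z^0$, i.e.\ $X\overset{(d)}{=}P^1_a$. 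This monotone coupling argument is precisely what produces the singular limit your density-based strategy cannot reach, and you should replace the second half of your proposal with it.
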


\begin{rk}
As a by-product of the proof of Theorem \ref{conv}, $P^{1,\eta}_{a}$ is actually the law, on $C([0,1])$, of the unique strong solution to the SDE
$$X_{t} = a + \int_{0}^{t} \frac{\mathbf{1}_{\{X_{s} \leq \eta\}}}{X_{s}} \d s + B_{t}, $$
which corresponds to the SDE of a $3$-Bessel process with an additional truncation in the drift. In particular, if $\eta = +\infty$, we recover the law $P^{3}_{a}$, so formally $P^{3}_{a}=P^{1,\infty}_{a}$.  Note moreover that, by  comparison (see Theorem (3.7) in \cite[Chapter IX]{revuz2013continuous}) the solution $(X_t)_{t \geq 0}$ of the above SDE is non-decreasing in $\eta$, so that the family $(P^{1,\eta}_{a})_{ \eta 
>0}$ is non-decreasing in $\eta$ for the 
stochastic ordering on $C([0,1])$ when 
the latter is endowed with the usual 
partial ordering: $u \leq v$ if $u(t) \leq v(t)$ for all $t \in [0,1]$. Finally, Theorem \ref{conv} states that the limit  $P^{1,0}_{a} := \underset{\eta \to 0}{\lim} \downarrow P^{1,\eta}_{a}$
is given by $P^{1}_{a}$. To sum up, denoting by $\preceq$ the stochastic ordering for probability measures on $C([0,1])$, we thus have 
$$P^{1}_{a} = P^{1,0}_{a} \preceq P^{1,\eta}_{a} \preceq P^{1,\eta'} \preceq P^{1,\infty}_{a} = P^{3}_{a},$$ 
for all $\eta \leq \eta'$. Thus, the family of continuous wetting measures $(P^{1,\eta}_{a})_{ \eta >0}$ provides a monotonously continuous interpolation between the laws $P^{3}_{a}$ and $P^{1}_{a}$.  
\end{rk}

\begin{rk}
Theorem \ref{conv} provides a continuous approximation of the law of a reflecting Brownian motion on the interval $[0,1]$. The proof given below actually works on any finite interval $[0,T]$, $T>0$, in which case the density in the definition of $P^{1,\eta}_{a}({\rm{d}} X)$ has to be replaced with
\begin{equation*}
\frac{X_{T} \wedge \eta}{X_{T}} \frac{a}{a \wedge \eta} \exp \left( \frac{1}{2 \eta} L^{\eta}_{T} \right),
\end{equation*}  
which (see \eqref{exp_martingale} below) is an exponential martingale in $T$. However, the result does not extend to $T=\infty$, because this martingale is not uniformly integrable, and actually converges a.s. to $0$ as $T \to \infty$. Indeed, if $X$ is a $3$-dimensional Bessel process started e.g. from $a=0$, then on the one hand $X_T \underset{T \to \infty}{\longrightarrow} + \infty$ a.s., while on the other hand, by Ex. 2.5.a. in \cite[Chapter XI.2]{revuz2013continuous}, $(L^{a}_{\infty})_{a \geq 0}$ is distributed as a $2$-dimensional squared Bessel process, so is finite. Note in particular that $\exp(\frac{1}{2 \eta} L^\eta_{\infty})$ is not integrable: remarkably, $\frac{1}{2 \eta}$ is precisely the smallest coefficient $\alpha$ such that $\exp(\alpha L^\eta_{\infty})$ is not integrable.     
\end{rk}

\begin{proof}[Proof of Theorem \ref{conv}]
The second claim follows from the first one, since, by Imhof's relation (see Exercise 4.18 in \cite[ChapterXII]{revuz2013continuous}), we have
\[P^{3}_{0}({\rm{d}} X) = \sqrt{\frac{2}{\pi}} X_{1} \, m({\rm{d}} X). \]
So it suffices to prove the first claim. Under $P^{3}_{a}$, the canonical process on $C([0,1])$ satisfies the SDE
\[ X_{t} = a + \int_{0}^{t} \frac{\d s}{X_{s}} + B_{t}.\]
We will first prove that for all $\eta >0$,  $P^{1,\eta}_{a}$ is the law, on $C([0,1])$, of the unique strong solution $X^\eta$ to the SDE
\begin{equation}
\label{approx_sde}
X_{t} = a + \int_{0}^{t} \frac{\mathbf{1}_{\{X_{s} \leq \eta\}}}{X_{s}} \d s + B_{t}.
\end{equation} 
and then we will show that $X^\eta$ converges in law to $P^{1}_{a}$ as $\eta \to 0$.
This will yield the claim.

The first point is proven using Girsanov's Theorem. Indeed, consider the local martingale
\[ M_{t} = - \int_{0}^{t} \frac{\mathbf{1}_{\{X_{s} > \eta\}}}{X_{s}} d B_{s}, \quad t \geq 0. \]
The corresponding exponential local martingale is given by
\[ \mathscr{E}(M)_{t} = \exp \left( M_{t} - \frac{1}{2} \langle M, M \rangle_{t} \right), \quad t \geq 0.\]
Since 
$$\langle M, M \rangle_{t} = \int_0^t \frac{\mathbf{1}_{\{X_{s} > \eta\}}}{X_{s}^2} \, \d s \leq t \, \eta^{-2},$$ 
it follows by Corollary (1.16) in \cite[Chapter VIII]{revuz2013continuous} that $\left(\mathscr{E}(M)_{t}\right)_{t \geq 0}$ is a martingale. Now, we have
\[ M_{t} - \frac{1}{2} \langle M, M \rangle_{t} = - \int_{0}^{t} \frac{\mathbf{1}_{\{X_{s} > \eta\}}}{X_{s}} \d B_{s} - \frac{1}{2} \int_{0}^{t} \frac{\mathbf{1}_{\{X_{s} > \eta\}}}{X_{s}^{2}} \d s.\]
We intend to re-express this quantity without stochastic integral. To do so,  consider the function $F: \mathbb{R}_{+}^{*} \to \mathbb{R}_{+}$ defined by
\[ F(x) := \log \left( \frac{x \wedge \eta}{x} \right), \quad x > 0.\]
$F$ is the difference of two convex functions on $\mathbb{R}_{+}^{*}$. Therefore, by the It\^o-Tanaka formula (cf. Theorem (1.5) in \cite[Chap. VI]{revuz2013continuous}), we have
\[ F(X_{t}) = F(a) + \int_{0}^{t} F'(X_{s}) \d X_{s} + \frac{1}{2} \int_{\mathbb{R}} F''({\rm{d}} x) L^{x}_{t}. \]
Since
\[ F'(x) = - \frac{\mathbf{1}_{\{x > \eta\}}}{x},  \quad x > 0, \]
and
\[ F''({\rm{d}} x) = \frac{\mathbf{1}_{\{x > \eta\}}}{x^{2}} \d x - \frac{1}{\eta} \delta_{\eta}({\rm{d}} x), \]
under the law $P^{3}_{a}$, the canonical process thus satisfies
\[ \log \left( \frac{X_{t} \wedge \eta}{X_{t}} \right) = \log \left( \frac{a \wedge \eta}{a} \right) - \int_{0}^{t} \frac{\mathbf{1}_{\{X_{s} > \eta\}}}{X_{s}} \left( \frac{1}{X_{s}} \d s + \d B_{s} \right) + \frac{1}{2} \int_{0}^{t} \frac{\mathbf{1}_{\{X_{s} > \eta\}}}{X_{s}^{2}} \d s - \frac{1}{2 \eta} L^{\eta}_{t},\]
whence we obtain
\[ - \int_{0}^{t} \frac{\mathbf{1}_{\{X_{s} > \eta\}}}{X_{s}} \d B_{s} - \frac{1}{2} \int_{0}^{t} \frac{\mathbf{1}_{\{X_{s} > \eta\}}}{X_{s}^{2}} \d s = \log \left( \frac{X_{t} \wedge \eta}{X_{t}} \frac{a}{a \wedge \eta} \right) +  \frac{1}{2 \eta} L^{\eta}_{t}. \]
Therefore
\begin{equation}
\label{exp_martingale} 
\mathscr{E}(M)_{t} = \frac{X_{t} \wedge \eta}{X_{t}} \frac{a}{a \wedge \eta} \exp \left( \frac{1}{2 \eta} L^{\eta}_{t} \right).
\end{equation}
By Girsanov's theorem, under the probability law $\mathscr{E}(M)_{1} \, P^{3}_{a}=P^{1,\eta}_{a}$ on $C([0,1])$, the canonical process satisfies \eqref{approx_sde}. Moreover, this SDE has pathwise uniqueness since the function $x \mapsto \frac{\mathbf{1}_{\{x \leq \eta\}}}{x}$ is non-increasing on $\mathbb{R}_+$. Therefore, by the Yamada-Watanabe Theorem, the SDE \eqref{approx_sde} admits a unique strong solution $(X^{\eta})_{t \geq 0}$, the restriction of which to $[0,1]$ has law $P^{1,\eta}_{a}$.
There remains to establish the convergence in law of $(X^{\eta})_{t \geq 0}$ to $P^{1}_{a}$. To do so, 
note that by the comparison theorem (3.7) in \cite[Chapter IX]{revuz2013continuous},
for all $\eta, \bar{\eta} >0$ such that $\eta \leq \bar{\eta}$, we have $X^{\eta} \leq X^{\bar{\eta}}$ a.s. 
Since moreover $X^{\eta} \geq 0$ a.s. for all $\eta$, we deduce the existence of a process $X_{t} \geq 0$ such that, for any decreasing sequence of positive real numbers $(\eta_n)_{n \geq 1}$ converging to $0$, the sequence $(X^{\eta_n})_{n \geq 1}$ converges a.s. pointwise on $\mathbb{R}_+$ to $X$ from above. Therefore, the only possible subsequential weak limit of $P^{1,\eta}_{a}$ as $\eta \to 0$ is given by the law of $(X_t)_{0 \leq t \leq 1}$ on $C([0,1])$.
Setting
\[ Z_{t} := X_{t}^{2}, \quad t \geq 0 \]
and, for all $\eta >0$
\[ Z^{\eta}_{t} := (X^{\eta}_{t})^{2}, \quad t \geq 0, \]
then, by It\^{o}'s lemma
\begin{equation}
\label{sde_z_eta}
 Z^{\eta}_{t} = a^{2} + 2 \int_{0}^{t} \sqrt{Z^{\eta}_{s}} \d B_{s} + 2 \int_{0}^{t} \mathbf{1}_{\{Z^{\eta}_{s} \leq \eta^{2}\}} + t.
\end{equation}
From this equation, we deduce that the sequence of probability measures $(P^{1,\eta}_{a})_{\eta > 0}$ is tight on $C([0,1])$. Indeed, by \eqref{sde_z_eta}, for all $0 \leq s < t \leq 1$
\[\mathbb{E} \left[ |Z^{\eta}_{t}-Z^{\eta}_{s}|^{4} \right] \leq C \left( \left(\int_{s}^{t}  \mathbb{E}(Z^{\eta}_{u}) \d u \right)^{2} + (t-s)^{2} \right),\]
where $C>0$ is some universal constant. Since, by comparison, $Z^{\eta} \leq Z^{\infty}$, where $Z^{\infty}$ is a $3$-dimensional squared Bessel process, we deduce that
\[ \mathbb{E} \left[ |Z^{\eta}_{t}-Z^{\eta}_{s}|^{4} \right] \leq C'(t-s)^{2}, \]
for some (other) universal constant $C'>0$, whence
\[ \mathbb{E} \left[ |X^{\eta}_{t}-X^{\eta}_{s}|^{8} \right] \leq C'(t-s)^{2},\]
and the claimed tightness follows by Kolmogorov's tightness criterion, see Theorem (1.8) in \cite[Chapter XIII]{revuz2013continuous}. Hence $P^{1,\eta}_{a}$ converges weakly as $\eta \to 0$ to the law of $(X_t)_{0 \leq t \leq 1}$ on $C([0,1])$. We finally show that the latter is given by $P^{1}_{a}$. By the comparison theorem (3.7) in \cite[Chapter IX]{revuz2013continuous}, we deduce from \eqref{sde_z_eta} that, a.s., for all $n \geq 1$, $Z^{1/n} \geq Z^{0}$, where $Z^{0}$ is the unique strong solution of
\[ Z^{0}_{t} = a^{2} + 2 \int_{0}^{t} \sqrt{Z^{0}_{s}} \d B_{s} + t. \]
Sending $n \to \infty$, we deduce that, a.s., $Z \geq Z^{0}$.
Note that $Z^{0}$ is a one-dimensional squared Bessel process. Therefore, almost-surely, $Z_{t} \geq Z^{0}_{t} > 0$ for a.e. $t \geq 0$. Hence, a.s., for a.e. $t \geq 0$
\[ \mathbf{1}_{\{Z^{\eta}_{t} \leq \eta^{2}\}} \underset{\eta \to 0}{\longrightarrow} 0. \]
Hence, by dominated convergence, letting $\eta \to 0$ in \eqref{sde_z_eta}, we deduce that $Z$ satisfies the SDE
\[Z_{t} = a^{2} + 2 \int_{0}^{t} \sqrt{Z_{s}} \d B_{s} + t. \]
By strong uniqueness of this SDE, we deduce that $Z=Z^{0}$. Hence, in particular, the law of $(X_t)_{0 \leq t \leq 1}$ on $C([0,1])$ is given by $P^{1}_{a}$, and 
the proof is complete.
\end{proof}

\subsection{The corresponding dynamics}
In order to study the gradient dynamics of the measures $P^{1,\eta}_0$, we introduce a further mollified version of these. Namely, for all $\eta>0$ and $\epsilon \in (0,\eta)$, let
\begin{equation}
\label{mollified_cont_wetting}
P^{1,\eta,\epsilon}_0 = \frac{1}{Z_{\eta,\epsilon}} \frac{X_{1} \wedge \eta}{X_{1}} \frac{a}{a \wedge \eta} \exp \left( \frac{1}{2 \eta} \int_0^1 \rho_{\epsilon}(X_s -\eta) \, \d s \right) \, P^{3}_{0} ({\rm{d}} X),
\end{equation}
where $Z_{\eta, \epsilon}$ is a normalization constant, and $\rho_\epsilon = \frac{1}{\epsilon} \rho(\frac{x}{\epsilon})$, with $\rho$ a smooth, even function supported in $[-1,1]$ such that
$$\rho \geq 0, \qquad \int_{\mathbb{R}} \rho =1, \qquad \rho' \leq 0 \quad \text{on} \, \, \mathbb{R}_+. $$

\begin{prop}
For all $\eta>0$, $P^{1,\eta,\epsilon}_0$ converges weakly to $P^{1,\eta}_0$ as $\epsilon \to 0$.
\end{prop}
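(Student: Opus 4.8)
The plan is to exploit that $P^{1,\eta,\epsilon}_0$ and $P^{1,\eta}_0$ are both absolutely continuous with respect to $P^3_0$, with densities that differ only through the replacement of the semimartingale local time $L^\eta_1$ by its standard mollified approximation. Precisely, combining \eqref{mollified_cont_wetting} with Imhof's relation $P^3_0(\d X)=\sqrt{2/\pi}\,X_1\,m(\d X)$ and the definition of $P^{1,\eta}_0$ in Theorem \ref{conv}, and writing $X$ for the canonical process on $C([0,1])$, one has
\[
\frac{\d P^{1,\eta}_0}{\d P^3_0}=G:=\frac{X_1\wedge\eta}{X_1}\exp\!\Big(\tfrac1{2\eta}L^\eta_1\Big),\qquad
\frac{\d P^{1,\eta,\epsilon}_0}{\d P^3_0}=\frac{G^\epsilon}{Z_{\eta,\epsilon}},\quad
G^\epsilon:=\frac{X_1\wedge\eta}{X_1}\exp\!\Big(\tfrac1{2\eta}A^\epsilon_1\Big),
\]
where $A^\epsilon_t:=\int_0^t\rho_\epsilon(X_s-\eta)\,\d s$ and $Z_{\eta,\epsilon}=\mathbb E_{P^3_0}[G^\epsilon]$ (while $\mathbb E_{P^3_0}[G]=1$). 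Thus it suffices to show that $G^\epsilon\to G$ in $L^1(P^3_0)$ as $\epsilon\to 0$: this forces $Z_{\eta,\epsilon}\to 1$ and, for every bounded continuous $F$ on $C([0,1])$, $\mathbb E_{P^{1,\eta,\epsilon}_0}[F]=Z_{\eta,\epsilon}^{-1}\mathbb E_{P^3_0}[F\,G^\epsilon]\to\mathbb E_{P^3_0}[F\,G]=\mathbb E_{P^{1,\eta}_0}[F]$, which is the asserted weak convergence.

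First I would prove the almost sure convergence $A^\epsilon_1\to L^\eta_1$ under $P^3_0$. By the occupation times formula, $A^\epsilon_1=\int_{\mathbb R}\rho_\epsilon(x-\eta)\,L^x_1\,\d x$; since $\eta>0$, under $P^3_0$ the map $x\mapsto L^x_1$ admits a version that is continuous in a neighbourhood of $\eta$ (the $3$-Bessel process being a non-degenerate diffusion away from $0$), and since $\rho$ is an even probability density the measures $\rho_\epsilon(\cdot-\eta)\,\d x$ converge weakly to $\delta_\eta$; hence $A^\epsilon_1\to L^\eta_1$ a.s. As $X_1>0$ $P^3_0$-a.s., it follows that $G^\epsilon\to G$ a.s.

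The main obstacle is to upgrade this to convergence in $L^1(P^3_0)$, i.e.\ to establish uniform integrability of the family $(G^\epsilon)_{0<\epsilon<\eta}$ under $P^3_0$. Since $0<\frac{X_1\wedge\eta}{X_1}\le 1$, it is enough to show $\sup_{0<\epsilon<\eta}\mathbb E_{P^3_0}\big[\exp(\tfrac{1+\delta}{2\eta}A^\epsilon_1)\big]<\infty$ for some $\delta>0$, which yields uniform integrability by the de la Vallée-Poussin criterion. To this end I would apply Khasminskii's lemma to the nonnegative additive functional $t\mapsto A^\epsilon_t$. Using the elementary bound $p_s(x,z)\le C\,s^{-1/2}$ for the $3$-Bessel transition density (uniform in $x,z\ge 0$) together with $\int_{\mathbb R}\rho_\epsilon(z-\eta)\,\d z=1$, one gets
\[
\sup_{0<\epsilon<\eta}\ \sup_{x\ge 0}\ \mathbb E_x\!\big[A^\epsilon_\theta\big]
=\sup_{0<\epsilon<\eta}\ \sup_{x\ge 0}\int_0^{\theta}\!\!\int_{\mathbb R} p_s(x,z)\,\rho_\epsilon(z-\eta)\,\d z\,\d s
\ \le\ 2C\sqrt{\theta}\ \xrightarrow[\theta\to 0]{}\ 0 .
\]
Choosing $\theta>0$ small enough that $\tfrac{1+\delta}{2\eta}\,2C\sqrt{\theta}<1$, Khasminskii's lemma bounds $\sup_\epsilon\sup_x\mathbb E_x\big[\exp(\tfrac{1+\delta}{2\eta}A^\epsilon_\theta)\big]$ by a finite constant $K$; partitioning $[0,1]$ into $\lceil\theta^{-1}\rceil$ sub-intervals of length at most $\theta$ and iterating this estimate via the Markov property gives $\sup_\epsilon\mathbb E_{P^3_0}\big[\exp(\tfrac{1+\delta}{2\eta}A^\epsilon_1)\big]\le K^{\lceil\theta^{-1}\rceil}<\infty$. (Alternatively one may dominate $A^\epsilon_1\le\sup_{x\in[0,2\eta]}L^x_1$ for all $\epsilon<\eta$ and invoke $P^3_0$-integrability of $\exp(\alpha\sup_{x\in[0,2\eta]}L^x_1)$ for every $\alpha>0$, the local time at a fixed time having Gaussian-type upper tails; the Khasminskii route is preferable as it is self-contained. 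Note in particular that the $t=\infty$ estimate recalled in the preceding remark, integrability of $\exp(\alpha L^\eta_\infty)$ only for $\alpha<\tfrac1{2\eta}$, is \emph{not} sufficient, since one needs $\alpha=\tfrac{1+\delta}{2\eta}>\tfrac1{2\eta}$; it is precisely the finite-time improvement that Khasminskii's lemma delivers.)

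With uniform integrability in hand, the a.s.\ convergence $G^\epsilon\to G$ upgrades to convergence in $L^1(P^3_0)$, and by the reduction of the first paragraph the proposition follows. Everything apart from the uniform-in-$\epsilon$ exponential moment bound is routine.
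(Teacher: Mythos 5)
Your proof is correct, and it reaches the same key reduction as the paper (a.s.\ convergence of the mollified additive functional via the occupation times formula, plus a uniform integrability estimate), but the way you obtain the uniform exponential moment bound is genuinely different. The paper writes
\[
\frac{1}{2}\int_0^1\rho_\epsilon(X_s-\eta)\,\d s = R_\epsilon(X_1) - \int_0^1 R_\epsilon'(X_s)\,\d X_s,
\qquad R_\epsilon(x):=\int_0^x\!\!\int_{-\infty}^y\rho_\epsilon(z-\eta)\,\d z\,\d y,
\]
uses $0\le R_\epsilon(x)\le x$ and $0\le R_\epsilon'\le 1$ to bound the density by $\exp\bigl(X_1/\eta - M^\epsilon_1\bigr)$, and then splits by Cauchy--Schwarz into an exponential moment of $X_1$ (controlled by the Gaussian representation $X_1\overset{(d)}{=}|B^{(3)}_1|$ under $P^3_0$) and an exponential martingale term $\exp(-2M^\epsilon_1)$, uniformly integrable by Novikov since $\langle M^\epsilon\rangle_1\le\eta^{-2}$. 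Your route instead bounds the $3$-Bessel transition density by $p_s(x,z)\le C s^{-1/2}$ uniformly, so that $\sup_{\epsilon,x}\mathbb E_x[\alpha A^\epsilon_\theta]\to 0$ as $\theta\to 0$, and then invokes Khasminskii's lemma with iteration over $\lceil\theta^{-1}\rceil$ sub-intervals. Both are complete. The paper's argument is more self-contained: it uses only It\^o's formula and the explicit $1/x$ drift of the Bessel SDE, no heat kernel estimates, and the bound is crisp and uniform in $\epsilon$ by inspection. Your Khasminskii route is a more generic potential-theoretic argument; it transfers without change to a wider class of one-dimensional diffusions for which a uniform short-time heat kernel bound is available, and it makes transparent why the estimate works on compact time intervals yet fails as $T\to\infty$ (a point you correctly flag against the $t=\infty$ remark in the paper). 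Note also that you phrase the reduction carefully through the normalizing constant $Z_{\eta,\epsilon}\to 1$, which the paper leaves implicit; that is a small but welcome tidying-up.
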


\begin{proof}
It suffices to show that, for all bounded, continuous function $F$ on $C([0,1])$, we have
\begin{equation}
\label{weak_conv}
E^{3}_{0} \left[\frac{X_{1} \wedge \eta}{X_{1}} \exp \left( \frac{1}{2 \eta} \int_0^1 \rho_{\epsilon}(X_s -\eta) \, \d s \right)  F(X) \right] \underset{\epsilon \to 0}{\longrightarrow} E^{3}_{0} \left[ \frac{X_{1} \wedge \eta}{X_{1}} \exp \left( \frac{1}{2 \eta} L^{\eta}_1 \right) F(X)  \right],
\end{equation}
where we have denoted by $E^3_0$ the expectation operator associated with $P^3_0$. By the occupation times formula,  
$$\int_0^1 \rho_{\epsilon}(X_s -\eta) \, \d s = \int_0^\infty \rho_{\epsilon}(a -\eta) L^a_1 \, d a, $$
and, by continuity of the process $(L^a_1)_{a \geq 0}$, this converges a.s. to $L^\eta_1$ as $\epsilon \to 0$. 
Moreover, defining for all $\epsilon>0$
$$R_\epsilon(x)= \int_0^x \int_{-\infty}^y \rho_{\epsilon}(z-\eta) \, \d z \, \d y, $$
since $R_{\epsilon}''(x) =\rho_{\epsilon}(x -\eta)$ and $R_\epsilon(0)=0$, we have by It\^{o}'s lemma 
$$\frac{1}{2} \int_0^1 \rho_{\epsilon}(X_s -\eta) \, \d s = R_{\epsilon}(X_1) - \int_{0}^1 R_{\epsilon}'(X_s) (\d B_s + \frac{1}{X_s} \, d s) \leq X_1 - \int_{0}^1 R_{\epsilon}'(X_s) \d B_s,  $$
where we used the fact that $R_{\epsilon}(x) \leq x$ and $R_{\epsilon}'(x) \geq 0$ for all $x$ in the last inequality. We thus obtain
$$\frac{X_{1} \wedge \eta}{X_{1}} \exp \left( \frac{1}{2 \eta} \int_0^1 \rho_{\epsilon}(X_s -\eta) \, \d s \right)  F(X) \leq \|F\|_{\infty} \exp \left(\frac{X_1}{\eta} - M^\epsilon_{1} \right),$$
where $M^\epsilon_{t} = \frac{1}{\eta}\int_{0}^t R_{\epsilon}'(X_s) \d B_s$ is a local martingale. To obtain the requested convergence, it therefore suffices to show that the random variables 
$$ \exp \left(\frac{X_1}{\eta} - M^\epsilon_{1} \right)$$
are uniformly integrable in $\epsilon$.
By the Cauchy-Schwarz inequality, this will follow upon showing that a) $\exp (2 X_1/ \eta) $ is integrable and b) $\exp(-2M^{\epsilon}_1)$ is uniformly integrable. The claim a) follows by noting (see \cite[Chapter XI]{revuz2013continuous}) that under $P^3_0$, $X \overset{(d)}{=} \sqrt{B_1^2 + B_2^2 + B_3^2}$, where $(B_1,B_2,B_3)$ is a standard Brownian motion in $\mathbb{R}^3$, so that
\begin{align*}
E^3_0 \exp (2X_1/ \eta)  = \mathbb{E} \exp \left( 2 \eta^{-1} \sqrt{B_1^2 + B_2^2 + B_3^2}\right) &\leq \mathbb{E} \exp (2 \eta^{-1} (|B_1| + |B_2| + |B_3|)) \\
&= \left(\mathbb{E} \exp (2 \eta^{-1} |B_1|) \right)^3 < \infty. 
\end{align*}
On the other hand, the claim b) follows by Novikov's criterion upon noting that
$$ \langle  2 M^\epsilon \rangle_1 = \frac{4}{\eta^2} \int_{0}^1 R_{\epsilon}'(X_s)^2 \, \d s \leq \frac{4}{ \eta^2},$$
where the last quantity depends on $\eta$ but \textit{not} on $\epsilon$. 
The requested convergence follows.
\end{proof}
Henceforth, we fix $\eta, \epsilon >0$. We recall the definition $H:= L^{2}([0,1])$, and
\[K= \{ h \in L^{2}([0,1]), \, h \geq 0 \}.\]
Let $\mathcal{F} \mathcal{C}^{\infty}_{b}(H)$ denote the space of functionals $F:H \to \mathbb{R}$ of the form
\begin{equation}\label{Fexp}
F (z) = \psi(\langle l_{1}, z \rangle, \ldots, \langle l_{m}, z \rangle ), \quad z \in H,
\end{equation}
with $m \in \mathbb{N}$, $\psi \in C^{\infty}_{b}(\mathbb{R}^{m})$, and $l_{1}, \ldots, l_{m} \in \text{Span} \{ e_{k}, k \geq 1 \}$, where, for all $n \geq 1$, $e_n$ is defined by \eqref{def_e_n}. We also define:
\[\mathcal{F} \mathcal{C}^{\infty}_{b}(K) := \left\{ F \big \rvert_{K} \, , \ F \in \mathcal{F} \mathcal{C}^{\infty}_{b}(H) \right\}.
\]
Moreover, for $f \in \mathcal{F} \mathcal{C}^{\infty}_{b}(K)$ of the form $f=F \big \rvert_{K}$, with $F \in \mathcal{F} \mathcal{C}^{\infty}_{b}(H)$, we define $\nabla f : K \to H$ by
\[ \nabla f (z) = \nabla F(z), \quad z \in K, \]
where this definition does not depend on the choice of $F \in \mathcal{F} \mathcal{C}^{\infty}_{b}(H)$ such that $f=F \big \rvert_{K}$.
%
%
With these definitions at hand, we consider the form $\mathcal{E}^{1,\eta, \epsilon}$ defined on $\mathcal{F} \mathcal{C}^{\infty}_{b}(K)$ as follows:
\[ \mathcal{E}^{1,\eta,\epsilon} (u,v) = \frac{1}{2} \int_{K} \nabla u (x) . \nabla v (x) \d P^{1,\eta,\epsilon}_0(x), \quad u,v \in \mathcal{F} \mathcal{C}^{\infty}_{b}(K). \]
By Theorems 5 and 6 in \cite{zambotti2002integration}, the form $(\mathcal{F} \mathcal{C}^{\infty}_{b}(K), \mathcal{E}^{1,\eta, \epsilon})$ is closable, and it holds that its closure $(\text{Dom}(\mathcal{E}^{1,\eta,\epsilon}), \mathcal{E}^{1,\eta, \epsilon})$ is a quasi-regular Dirichlet form. Moreover, in view of the relation \eqref{mollified_cont_wetting}, the $K$-valued Markov process $(u^{\eta,\epsilon}_{t})_{t \geq 0}$ associated with $\mathcal{E}^{1,\eta,\epsilon}$ satisfies the following Nualart-Pardoux type equation:
\begin{equation}
\label{spde_reflect_sticky_eta_epsilon}
\left\{ \begin{array}{ll}
{\displaystyle
\frac{\partial u^{\eta,\epsilon}}{\partial t} = \frac{1}{2} \frac{\partial^{2} u^{\eta,\epsilon} }{\partial x^{2}}+ \xi + \zeta + \frac{1}{4 \eta} \rho_{\epsilon}'(u^{\eta,\epsilon}-\eta)},
\\ \\
u^{\eta,\epsilon} \geq 0, \quad \d \zeta \geq 0, \quad \int_{\mathbb{R}_{+} \times [0,1]} u^{\eta,\epsilon} \d \zeta = 0,
\end{array} \right.
\end{equation}
where $\rho_\epsilon$ is as in \eqref{mollified_cont_wetting} above. Note that, for all $\epsilon \in (0,\eta)$, the term $\rho_{\epsilon}'(u^{\eta,\epsilon}-\eta)$ is zero, except when $|u^{\eta,\epsilon}-\eta | \leq \epsilon$. Moreover, it is positive when $\eta - \epsilon \leq u^{\eta,\epsilon} \leq \eta$, and negative when $\eta \leq u^{\eta,\epsilon} \leq \eta + \epsilon$. Thus, equation \eqref{spde_reflect_sticky_eta_epsilon} can be interpreted as an SPDE with reflection at $0$ and an attractive mechanism at $\eta$ which encourages the solution to remain in the interval $(\eta-\epsilon, \eta + \epsilon)$. We now formulate a conjecture for the limit in law as $\epsilon, \eta \to 0$ of the above processes.

%
%


For all $\eta, \epsilon >0$, we consider the $K$-valued stationary Markov process $(u^{\eta,\epsilon}_{t})_{t \geq 0}$ associated with the Dirichlet form $\mathcal{E}^{1,\eta,\epsilon}$, and started from equilibrium: $u^{\eta,\epsilon}_0 \overset{(d)}{=} P^{1,\eta,\epsilon}_0$. Recall that $H^{-1}(0,1)$ denotes the completion of $H=L^{2}(0,1)$ w.r.t.\ the norm
\[ \|f \|^{2}_{-1} := \sum_{n=1}^{\infty} n^{-2} \, | \langle f, e_{n} \rangle |^{2}, \]
where $e_{n}(\theta) :=\sqrt{2} \sin(n \pi \theta), \, \theta \in [0,1]$.

\begin{cnj}
\label{cnj_cv_continuous}
For all $T>0$, $(u^{\eta,\epsilon}_{t})_{t \in [0,T]}$ converges weakly to $(u_t)_{t \in [0,T]}$ in $C([0,T], H^{-1}(0,1))$ as we send $\epsilon$, then $\eta$ to $0$, where $u$ is the Markov process considered in Section \ref{subsect_conjec_scaling_limit} above.
\end{cnj}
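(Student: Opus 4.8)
The plan is to split the statement into a tightness part and an identification part, as in the proof of Theorem \ref{thm_tightness_discrete_wetting}. First I would show that the family $(u^{\eta,\epsilon})_{0<\epsilon<\eta}$ is tight in $C([0,T], H^{-1}(0,1))$. Then I would show that, along the iterated limit in which $\epsilon \to 0$ first and $\eta \to 0$ afterwards, every subsequential limit law coincides with the law of the stationary process $(u_t)_{t \in [0,T]}$ of Section \ref{subsect_conjec_scaling_limit}. The identification would be obtained by establishing Mosco convergence of the Dirichlet forms $\mathcal{E}^{1,\eta,\epsilon}$ towards $\mathcal{E}$: this yields strong resolvent, hence semigroup, convergence, and, since the invariant measures $P^{1,\eta,\epsilon}_0$ converge to the invariant measure $P^1_0$ of $\mathcal{E}$, convergence of the finite-dimensional distributions of the stationary processes; combined with the tightness it upgrades to weak convergence in $C([0,T], H^{-1}(0,1))$ by \cite[Thm 7.2, Chap. 3]{MR838085}. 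A more direct route, passing to the limit in the Nualart--Pardoux equation \eqref{spde_reflect_sticky_eta_epsilon} and invoking uniqueness for the limiting SPDE of Section \ref{subsect_conjec_scaling_limit}, seems barred by the very fact recalled there that no strong Feller property --- hence no such uniqueness --- is available for that equation.

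The tightness part I expect to be routine, following the proof of Theorem \ref{thm_tightness_discrete_wetting} almost verbatim. For $h \in \mathrm{Span}\{e_k : k \geq 1\}$ and $\psi := \langle h, \cdot \rangle$, the key point is that $\mathcal{E}^{1,\eta,\epsilon}$ is a pure gradient form --- the $(\eta,\epsilon)$-dependence sits only in the reference measure $P^{1,\eta,\epsilon}_0$, not in the bilinear expression $\frac{1}{2}\int \nabla u \cdot \nabla v \, \d P^{1,\eta,\epsilon}_0$ --- so that the energy measure of $\psi$ equals $\|h\|^2 P^{1,\eta,\epsilon}_0$, and, by the Lyons--Zheng decomposition (\cite[Thm 5.7.1]{fukushima2010dirichlet}), the relevant martingales $M^1, M^2$ satisfy $\langle M^i \rangle_t = \|h\|^2 t$ \emph{uniformly in $(\eta,\epsilon)$}. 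The BDG inequality then gives $\mathbb{E}|\langle u^{\eta,\epsilon}_t - u^{\eta,\epsilon}_s, h \rangle|^p \leq C_p |t-s|^{p/2}\|h\|^p$ for all $p \geq 2$, and summing over $h = e_k$ against the weights $k^{-2}$ yields, exactly as in Section \ref{sec_tightness}, $\mathbb{E}\|u^{\eta,\epsilon}_t - u^{\eta,\epsilon}_s\|^p_{H^{-1}(0,1)} \leq C'_p |t-s|^{p/2}$ uniformly in $(\eta,\epsilon)$. Since $u^{\eta,\epsilon}_t \overset{(d)}{=} P^{1,\eta,\epsilon}_0$ for every $t$ and the one-time marginals converge, first as $\epsilon \to 0$ and then as $\eta \to 0$, to $P^1_0$ in $C([0,1]) \hookrightarrow H^{-1}(0,1)$ (by the Proposition above and Theorem \ref{conv}), the marginals are tight at each stage of the iteration, and tightness in $C([0,T], H^{-1}(0,1))$ follows.

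For the identification I would proceed in the two stages dictated by the iterated limit. In Stage 1 ($\epsilon \to 0$, $\eta$ fixed) all the measures $P^{1,\eta,\epsilon}_0$ and the target $P^{1,\eta}_0$ are absolutely continuous with respect to $P^3_0$, with explicit densities (cf.\ \eqref{mollified_cont_wetting} and Theorem \ref{conv}), so Mosco convergence $\mathcal{E}^{1,\eta,\epsilon} \to \mathcal{E}^{1,\eta}$ should follow from the convergence of these densities and a standard lower-semicontinuity (closability-type) argument; recovery sequences can be taken constant, since for a fixed cylinder functional $F \in \mathcal{F}\mathcal{C}^\infty_b(K)$ one has $\mathcal{E}^{1,\eta,\epsilon}(F,F) = \frac{1}{2}\int |\nabla F|^2 \, \d P^{1,\eta,\epsilon}_0 \to \frac{1}{2}\int |\nabla F|^2 \, \d P^{1,\eta}_0$ by boundedness and continuity of $|\nabla F|^2$. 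This identifies the $\epsilon \to 0$ limit of $u^{\eta,\epsilon}$ as the $P^{1,\eta}_0$-symmetric Markov process $u^\eta$, namely the $\epsilon \to 0$ limit of the dynamics \eqref{spde_reflect_sticky_eta_epsilon}. In Stage 2 ($\eta \to 0$) one must prove Mosco convergence $\mathcal{E}^{1,\eta} \to \mathcal{E}$; now $P^{1,\eta}_0$ and $P^1_0$ are mutually singular, so this must be formulated as Mosco convergence of Dirichlet forms over varying $L^2$-spaces, in the sense of Kuwae and Shioya. Recovery sequences are again produced by fixed cylinder functionals, using $P^{1,\eta}_0 \to P^1_0$ weakly; the $\Gamma$-$\liminf$ inequality is the only point that then remains.

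I expect the $\Gamma$-$\liminf$ inequality in Stage 2 to be the main obstacle, and in fact to be the reason the statement is only a conjecture. As $\eta \to 0$ the reference measure degenerates from a law carried by paths avoiding $0$ at positive times (absolutely continuous with respect to $P^3_0$) to the law of a reflecting Brownian motion, whose zero set is non-trivial: the effective state spaces genuinely move, and the usual sufficient conditions for Mosco convergence of gradient forms --- in particular log-concavity of the reference measures, which underlies the Wiener-space arguments of \cite{zambotti2002integration, zambotti2003integration} --- fail, since the law of a reflecting Brownian motion is not log-concave. This is the same degeneracy that is responsible for the absence of a known strong Feller property for the $\delta = 1$ Bessel SPDE recalled in Section \ref{subsect_conjec_scaling_limit}. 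A natural way to try to circumvent it is to exploit the stochastic monotonicity $P^1_0 \preceq P^{1,\eta}_0 \preceq P^{1,\eta'}_0$ for $\eta \leq \eta'$ noted in the Remark following Theorem \ref{conv}, together with the induced monotonicity of the forms $\mathcal{E}^{1,\eta}$ in $\eta$: one could hope to realise $\mathcal{E}$ as a decreasing limit of the $\mathcal{E}^{1,\eta}$ and deduce the $\liminf$ bound from the monotone-limit theory of Dirichlet forms, \emph{provided} one can show that no Dirichlet energy is lost in the limit, that is, that the limiting form is not strictly smaller than $\mathcal{E}$. Ruling out such a loss of energy --- equivalently, controlling the behaviour of the solutions near the wall as $\eta \to 0$ --- is precisely where the argument currently gets stuck.
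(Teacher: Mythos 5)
This statement is a \emph{conjecture} in the paper, not a proved theorem, and the paper supplies no proof to compare against. Immediately after stating it, the authors remark exactly what your tightness paragraph reproduces: since $P^{1,\eta,\epsilon}_0 \to P^1_0$ under the iterated limit, tightness of $(u^{\eta,\epsilon})$ in $C([0,T],H^{-1}(0,1))$ follows by the same Lyons--Zheng argument as in Theorem~\ref{thm_tightness_discrete_wetting}, and the identification of the limit is ``for now, open''. Your proposal correctly reproduces this split, correctly locates the obstruction in the identification step, and --- honestly --- does not claim to close it; in that sense you are in full agreement with the paper and there is nothing to correct. One small factual caution: it is worth double-checking that $\langle M^i\rangle_t = \|h\|^2 t$ holds exactly (rather than up to a universal constant); the normalisation in $\mathcal{E}^{1,\eta,\epsilon}$ carries a factor $\tfrac12$ and the Fukushima energy-measure identity a factor $2$, so the constant works out, but that bookkeeping is the one place where a slip could go unnoticed.

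What you add beyond the paper is a concrete mechanism for the identification, namely Mosco convergence in the Kuwae--Shioya varying-$L^2$ framework, with the useful observation that the recovery-sequence half is free because the forms share a fixed gradient carr\'e du champ and only the reference measure moves. The paper instead points at the IbPF route of \cite{zambotti2004fluctuations} and notes it is blocked by the absence of a strong Feller property for the limiting semigroup. These are different angles of attack, and your second stage correctly flags that they are likely to hit the same wall: in the $\eta\to 0$ limit the reference measure passes from a law equivalent to $P^3_0$ to the mutually singular law $P^1_0$, and the $\Gamma$-$\liminf$ bound is precisely the place where the degeneracy at the wall must be controlled --- morally the same difficulty as proving the strong Feller property. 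Your diagnosis that the log-concavity hypothesis underlying \cite{zambotti2002integration,zambotti2003integration} is unavailable for $P^1_0$ is a fair account of why neither route currently closes, and the monotonicity idea at the end is a sensible thing to try, though as you say one must rule out loss of Dirichlet energy in the monotone limit. In short: no error, no false claim of proof, a plausible programme, and the gap you leave open is the same one the paper leaves open.
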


Note that, since $P^{1,\eta,\epsilon}_0$ converges to $P^1_0$ as we send $\epsilon$, then $\eta$ to $0$, one can obtain the tightness of the whole process using the Lyons-Zheng argument, as in the proof of Theorem \ref{thm_tightness_discrete_wetting} above. To obtain the conjecture \ref{cnj_cv_continuous}, the main problem is therefore to identify the limit in probability of $u^{\eta,\epsilon}$ as $\epsilon, \eta \to 0$. For the same reasons as mentioned in Section \ref{subsect_conjec_scaling_limit} above this question is, for now, open.

\section{Acknowledgements}\label{sec:acknowledgements}
The research of J.D.D.\ and T.O.\ is supported by the German Research Foundation through the research unit
FOR 2402 -- Rough paths, stochastic partial differential equations and related topics. The authors are very grateful
to the latter for also supporting the stay of H.E.A.\ in Berlin, during which the present work was initiated. They would also like to thank Nicolas Perkowski and Lorenzo Zambotti for numerous precious discussions on this subject.

\bibliographystyle{alpha}
\bibliography{Dynamics_wetting_revised}

\end{document}